\newcounter{theorem}
\renewcommand{\thetheorem}{\arabic{section}.\arabic{theorem}}
\newenvironment{thm}[1]{\par
\begin{sloppypar}\refstepcounter{theorem}%
\noindent{\bf #1 \thetheorem.}\it{}}{\end{sloppypar}}
\newenvironment{theorem}{\begin{thm}{Theorem}}{\end{thm}}
\newenvironment{proposition}{\begin{thm}{Proposition}}{\end{thm}}
\newenvironment{corollary}{\begin{thm}{Corollary}}{\end{thm}}
\newenvironment{lemma}{\begin{thm}{Lemma}}{\end{thm}}
\newenvironment{defi}[1]{\par
\begin{sloppypar}\refstepcounter{theorem}%
\noindent{\bf #1 \thetheorem.}\rm{}}{\end{sloppypar}}
\newenvironment{definition}{\begin{defi}{Definition}}{\end{defi}}
\newenvironment{remark}{\begin{defi}{Remark}}{\end{defi}}
\newenvironment{hypothesis}{\begin{defi}{Hypothesis}}{\end{defi}}
\newcommand{\eh}{\hfill}\newlength{\sperr}
\newenvironment{proof}{{\settowidth{\sperr}{\rm Proof}
\par\addvspace{0.3cm}\noindent\parbox[t]{1.3\sperr}{\rm P\eh r\eh o\eh o\eh
f\eh.}}}{\nopagebreak \mbox{}\hfill $\blacksquare $}
\def\l{\left}
\def\r{\right}
\def\R{{\rm I\kern-.2em R}}   
  \def\H{{\cal H}} 
\def\C{{\cal C}}   \def\Op{\mathfrak{Op}}
\def\ep{\epsilon}
\def\X{{\cal X}}
\def\gB{\Omega^{\ep}}
\def\N{\mathbb{N}}
\begin{document}

\title{On the continuity of spectra for families of magnetic pseudodifferential operators}

\date{\today}

\author{Nassim Athmouni}
\affiliation{Facult\'{e} des Sciences de l'Universit\'{e} de Sfax, Sfax, Tunisie}
\author{Marius M\u antoiu}
\affiliation{Departamento de Matem\'aticas, Universidad de Chile, Las Palmeras 3425, Casilla 653,
Santiago, Chile, Email: mantoiu@imar.ro}
\author{Radu Purice}
\affiliation{Institute of Mathematics Simion Stoilow of the Romanian Academy, P.O.  Box
1-764, Bucharest, RO-70700, Romania, Email: purice@imar.ro}

\begin{abstract}
For families of magnetic pseudodifferential operators defined by symbols and magnetic fields
depending continuously on a real parameter $\epsilon$, we show that the corresponding family of spectra also varies continuously
with $\epsilon$.\\
\textbf{2000 Mathematics Subject Classification:} 47A10, 47L65, 81Q10.
\end{abstract}
\pacs{47A10, 47L65, 81Q10}

\keywords{Magnetic field, pseudodifferential operator,
spectrum, crossed product, continuous field, $C^*$-algebra}

\maketitle

\section{Introduction and main result}

It is known \cite{AS,Ne,If,BC},  that "the spectrum of a
Schr\"odinger operator with magnetic field $B$ is continuous in
$B$" under some assumptions on the regularity of the magnetic
field. Following some ideas in \cite{Be} and \cite{BIT}, we would
like to put this result in a more general (abstract) perspective.

We shall work on the phase space $\Xi:=\X\times\X^*\equiv\mathbb{R}^n\times\mathbb{R}^n$ and use systematically notations of the form $X=(x,\xi)$, $Y=(y,\eta)$, ... for its points. We shall consider classical Hamiltonians
$h:\Xi\rightarrow\mathbb{R}$ (not having a simple specific form), defined on the phase space, smooth magnetic fields $B$ (closed 2-forms with bounded derivatives of
any order) and quantum Hamiltonians $H^A\equiv\mathfrak{Op}^A(h)$
defined by a choice of a vector potential $A$ (with $B=dA$)
\cite{MP1,MP3,KO1}. Our aim is to study the continuity properties
of the spectrum $\sigma(H^A)$ as a subset of $\mathbb{R}$ when
both the symbol and the magnetic field $B$ depend on a parameter
$\epsilon$ belonging to some interval $I$.

The main obstacles are the general form of the symbols $h^\ep$ and
the fact that $H^{A^\ep}$ is defined using the vector potential
$A^\ep$ which can be rather bad behaved even for bounded and
smooth magnetic fields $B^\ep$. To overcome this, we work only
with the magnetic symbol of the operators $H^{A^\ep}$ and we
obtain affiliation \cite{ABG,GI1} of the classical Hamiltonians
$h^\ep$ to a certain (not locally trivial) continuous field
\cite{Di} of twisted crossed-product $C^*$-algebras \cite{PR1}, defined only in terms of the magnetic fields
$\{B^\epsilon\}_{\epsilon\in I}$ \cite{MPR1}. In this way, the
problem is reduced to the study of the continuity properties in
$\ep$ of the magnetic symbols $r^\ep$ defining resolvent families
of the operators $H^{A^\ep}$. Then the results in \cite{Ri} directly
imply the outer continuity of the spectrum (i.e. the 'stability
of the spectral gaps') and the strong continuity in the regular
representation (that we shall prove) implies the inner continuity
of the spectrum (i.e. the 'stability of spectral islands').

To describe this result, we start recalling our version of covariant quantization in a magnetic field.
Given a continuous magnetic field $B=dA$ defined by a vector potential $A$, we have the following quantization rule
\cite{MP1,IMP,KO1}:
\begin{equation}
\left[\mathfrak{Op}^A(f)u\right](x):=(2\pi)^{-n}\int_\X dy\int_{\X^\star}d\eta\;e^{i(x-y)\cdot\eta}\,
\lambda^A(x,y)\,f\left(\frac{x+y}{2},
\eta\right)u(y),\label{porc}
\end{equation}
with
\begin{equation}
\lambda^A(x,y):=e^{-i\int_{[x,y]}A}=e^{-i(y-x)\cdot\int_0^1ds\;A(x+s(y-x))}.
\end{equation}
This is first defined for $f$ belonging to the Schwartz space $\mathcal S(\Xi)$ but extends to a topological
isomorphism \cite{MP1}
\begin{equation}
 \mathfrak{Op}^A:\mathcal{S}^\prime(\Xi)\rightarrow\mathbb{B}\big(\mathcal{S}(\mathcal{X});\mathcal{S}^\prime(\mathcal{X})\big)
\end{equation}
(with the dual Fr\'{e}chet topology on $\mathcal{S}^\prime(\Xi)$ and the strong topology on
$\mathbb{B}\big(\mathcal{S}(\mathcal{X});\mathcal{S}^\prime(\mathcal{X})\big)$).
The main reason to use (\ref{porc}) is {\it gauge-covariance}: equivalent choices of vector potentials lead to unitarily
equivalent operators.

Our quantization induces
on the algebra of observables a composition law that only depends on the magnetic field $B$ \cite{MP1,IMP}, requiring
\begin{equation}
 \mathfrak{Op}^A(f)\,\mathfrak{Op}^A(g)=:\mathfrak{Op}^A(f\,\sharp^B g)
\end{equation}
for any $f,g\in\mathcal{S}(\Xi)$. Explicitly we have
\begin{equation}
 \left(f\,\sharp^B g\right)(X)=\pi^{-2n}\int_\Xi \int_\Xi dY dZ \;
e^{-2i\sigma(X-Y,X-Z)}\Omega^B(x,y,z)f(Y)g(Z),
\end{equation}
where $\sigma$ is the canonical symplectic form on $\Xi$ and
$\Omega^B:=\exp{\{-i\Gamma^B\}}$ with $\Gamma^B(x,y,z)$ defined as
the flux of the magnetic field through the triangle
$<x-y-z,x+y-z,x-y+z>$:
\begin{equation}
\Gamma^B(x,y,z):=4\sum_{j,k=1}^ny_jz_k\int_0^1ds\int_0^{1-s}dt\,B_{jk}\big(x-y-z+2sy+2tz\big).\label{Omega-parametric}
\end{equation}

Extending $\sharp^B$ by duality, we get {\it the magnetic Moyal algebra}
$$
\mathfrak{M}^B(\Xi):=\left\{f\in\mathcal{S}^\prime(\Xi)\,\mid\,\forall g\in\mathcal{S}(\Xi),\
f\,\sharp^B g\in\mathcal{S}(\Xi),\ g\,\sharp^B f\in\mathcal{S}(\Xi)\right\}
$$
and {\it the algebra of bounded observables}
$$
\mathfrak{A}^B(\Xi):=\left(\mathfrak{Op}^A\right)^{-1}\left[\mathbb{B}\left(L^2(\mathcal{X})\right)\right].
$$
This second one will be a $C^*$-algebra isomorphic to $\mathbb{B}\left(L^2(\mathcal{X})\right)$; it depends on the
magnetic field but not on the vector potential, by gauge covariance.

Our aim is to show how our intrinsic observable algebra approach to the study of quantum Hamiltonians in
non-homogenuous magnetic fields allows for a proof of the continuity of the spectra with respect to very general variations
of the symbol and of the magnetic field.

Let us state the assumptions. We need H\"ormander's classes of symbols
$$
S^m_{\rho}(\Xi):=\left\{f\in
C^\infty(\Xi)\mid\forall(a,\alpha)\in\mathbb{N}^n\times\mathbb{N}^n,\,\exists
C_{a\alpha}>0,\ \left|(\partial^a_x\partial^\alpha_\xi f)(x,\xi)\right|\leq
C_{a\alpha}\left<\xi\right>^{m-\rho\vert\alpha\vert}\right\},
$$
where $\left<\xi\right>:=(1+|\xi|^2)^{1/2}$. Our previous results \cite{MP1,IMP} show that
$S^m_\rho(\Xi)\subset\mathfrak{M}^B(\Xi)$ and $S^0_0(\Xi)\subset\mathfrak{A}^B(\Xi)$.

On the symbol spaces ($m\in\R,\,\rho=1,0$) we have Fr\'echet structures defined by the families of semi-norms
indexed by $N,M\in\mathbb{N}$
\begin{itemize}
 \item $\|f\|_{(\Xi,1,m,N,M)}:=\underset{|a|\leq N}{\max}\,\underset{|\alpha|\leq M}{\max}\,\underset{(x,\xi)}
 {\sup}\left\vert\left<\xi\right>^{-m+|\alpha|}\big(\partial^a_x\partial^\alpha_\xi f\big)(x,\xi)\right\vert,
 \quad\forall f\in S^m_1(\Xi)$,
\item $\|f\|_{(\Xi,m,N,M)}:=\underset{|a|\leq N}{\max}\,\underset{|\alpha|\leq
M}{\max}\,\underset{(x,\xi)}{\sup}\left\vert\left<\xi\right>^{-m}\big(\partial^a_x\partial^\alpha_\xi f\big)(x,\xi)
\right\vert$,\ \ \ $\forall f\in S^m_0(\Xi)$.
\end{itemize}

We also recall that $f\in S^m_1(\Xi)$ is called {\it elliptic} (and we write $f\in S^m_{1,\text{\sf ell}}(\Xi)$) if
$$
|f(x,\xi)|\ge C\left<\xi\right>^m\ \ \ \ \ {\rm for}\ |\xi|\ {\rm big\ enough}.
$$

\medskip
\begin{hypothesis}\label{hyp-h}
Consider a family of Hamiltonians $\{h^\epsilon\}_{\epsilon\in I}$ with $I\subset\mathbb{R}$ a compact interval,
 such that
\begin{itemize}
 \item $h^\epsilon\in S^m_{1,\text{\sf ell}}(\Xi)$ with $m>0$, for each $\epsilon\in I$,
\item the map $I\ni\epsilon\mapsto h^\epsilon\in S^m_{1}(\Xi)$ is continuous for the Fr\'echet topology on $S^m_{1}(\Xi)$.
\item there exist $C\in\R$ such that $h^\ep\ge -C,\ \forall\ \ep\in I$.
\end{itemize}
\end{hypothesis}

\begin{hypothesis}\label{hyp-B}
We are given a family of magnetic fields $\{B^\epsilon\}_{\epsilon\in I}$ with the components
$B^\epsilon_{jk}\in BC^\infty(\mathcal{X})$ such that the map
$I\ni\epsilon\mapsto B^\epsilon_{jk}\in BC^\infty(\mathcal{X})$ is continuous for the Fr\'echet topology on
$BC^\infty(\mathcal{X})$.
\end{hypothesis}

\medskip
It has been shown in \cite{IMP} that real elliptic elements $f$ of $S^m_{1}(\Xi)$ define self-adjoint operators $\Op^A(f)$
in the Hilbert space
$\H:=L^2(\X)$, having as domain a suitable magnetic analog of the $m$'th order Sobolev space.
The semi-norms on $BC^\infty(\X)$ can be obtained from the expressions above for $\|\cdot\|_{(\Xi,m,N,M)}$, by replacing
$\Xi$ with $\X$ and by setting $m=0$.

In order to state our main result we recall some notions of continuity of subsets \cite{Be,BIT}.

\medskip
\begin{definition}
 Let $I$ be a compact interval and suppose given a family $\{\sigma^\ep\}_{\ep\in I}$ of closed subsets of
 $\mathbb{R}$.
\begin{enumerate}
\item The family $\{\sigma^\ep\}_{\ep\in I}$ is called {\it
outer continuous at} $\ep_0\in I$ if for any compact subset $K$ of $\mathbb R$ such that
$K\cap\sigma^{\ep_0}=\emptyset$, there exists a neighborhood $V^{\epsilon_0}_K$ of $\ep_0$ with
$K\cap\sigma^{\ep}=\emptyset$, $\forall \ep\in V^{\epsilon_0}_K$.
\item The family $\{\sigma^\ep\}_{\ep\in
I}$ is called {\it inner continuous at} $\epsilon_0\in I$ if  for any open subset $\mathcal O$ of $\mathbb R$ such
that $\mathcal O\cap\sigma^{\ep_0}\ne\emptyset$, there exists a neighborhood $V^{\epsilon_0}_\mathcal O\subset I$ of
$\ep_0$ with $\mathcal O\cap\sigma^{\ep}\ne\emptyset$, $\forall \ep\in V^{\epsilon_0}_\mathcal O$.
\end{enumerate}
\end{definition}

In \cite{Be} the sets $\sigma^\ep$ are compact and $K$ is only taken to be closed.

\medskip
\begin{theorem}\label{T-M}
 Suppose given a compact interval $I\subset\mathbb{R}$, a family of classical Hamiltonians $\{h^\ep\}_{\ep\in I}$
 satisfying Hypothesis \ref{hyp-h} and a family of magnetic fields $\{B^\ep\}_{\ep\in I}$ satisfying Hypothesis \ref{hyp-B}.
 Let us consider the family of quantum Hamiltonians $H^\ep:=\mathfrak{Op}^{A^\ep}(h^\ep)$ for some choice of a vector
 potential $A^\ep$ for $B^\ep$. Then the spectra $\sigma^\ep:=\sigma(H^\ep)\subset\mathbb{R}$ form an outer and inner
 continuous family at any point $\ep\in I$.
\end{theorem}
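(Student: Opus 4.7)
The plan is to transport the problem from the Hilbert-space side to the symbol side and reduce it to a norm-continuity statement at the level of a continuous field of $C^*$-algebras. Fix $\la$ large enough (in particular $\la>C$, with $C$ as in Hypothesis \ref{hyp-h}, and large enough that the magnetic G\aa rding inequality makes $H^\ep+\la\ge 1$ for all $\ep\in I$); then $h^\ep+\la$ is real, elliptic of order $m>0$, and bounded below by $1$ pointwise. By the magnetic elliptic inversion theorem of \cite{IMP} there exists a unique $r^\ep\in S^{-m}_1(\Xi)$, lying in $\mathfrak{A}^{B^\ep}(\Xi)$, such that $r^\ep\,\sharp^{B^\ep}\,(h^\ep+\la)=1=(h^\ep+\la)\,\sharp^{B^\ep}\,r^\ep$, and consequently $\Op^{A^\ep}(r^\ep)=(H^\ep+\la)^{-1}$. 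Thus $h^\ep$ is affiliated to the intrinsic observable algebra through its resolvent symbol $r^\ep$, and the continuity problem for $\{\sigma(H^\ep)\}_{\ep\in I}$ is equivalent, through the spectral mapping $t\mapsto t^{-1}-\la$, to the analogous continuity problem for $\{\sigma(r^\ep)\setminus\{0\}\}_{\ep\in I}$ computed in any faithful representation.

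Next, Hypothesis \ref{hyp-B} allows me to invoke the construction of \cite{MPR1}: the family $\{B^\ep\}_{\ep\in I}$ gives rise to a continuous field of twisted crossed-product $C^*$-algebras $\{\mathfrak{C}^{B^\ep}\}_{\ep\in I}$ over $I$, each fiber being canonically $^*$-isomorphic (via partial Fourier transform) to a $C^*$-completion of the magnetic Moyal algebra attached to $B^\ep$, and each $\Op^{A^\ep}$ being a faithful representation of $\mathfrak{C}^{B^\ep}$ on $L^2(\X)$. The element $r^\ep$ therefore belongs to the fiber over $\ep$, and Theorem \ref{T-M} is reduced to the statement that $\ep\mapsto r^\ep$ is a continuous section of this field, together with the strong-continuity property of the representations $\Op^{A^\ep}$ that is needed for inner continuity.

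The principal obstacle — the step that requires real work — is the norm continuity of the section $\ep\mapsto r^\ep$. Hypothesis \ref{hyp-h} gives continuity of $h^\ep$ in the Fr\'echet topology of $S^m_1(\Xi)$; Hypothesis \ref{hyp-B} gives continuity of the phases $\Omega^{B^\ep}$ through formula (\ref{Omega-parametric}) and hence joint continuity of the product $(\ep,f,g)\mapsto f\,\sharp^{B^\ep}\,g$ in the relevant Fr\'echet topologies on symbols. To pass from these Fr\'echet continuities to $C^*$-norm continuity of $r^\ep-r^{\ep_0}$ inside the fiber I would combine three ingredients, all available from \cite{MP1,IMP,MPR1}: (i) a parametrix/Neumann-series construction producing $r^\ep$ from $h^\ep+\la$, with seminorm bounds that are locally uniform in $\ep$ thanks to the uniform ellipticity and the uniform lower bound; (ii) joint continuity of the magnetic Moyal product under the simultaneous variation of $\ep$ in the symbol and in the magnetic field; (iii) a Calder\'on--Vaillancourt-type estimate bounding $\|\Op^{A^\ep}(f)\|$ by finitely many seminorms of $f\in S^0_0(\Xi)$, with a constant independent of $\ep$ thanks to the $BC^\infty$-boundedness of $B^\ep$ uniform on $I$. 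Together, (i)--(iii) yield the desired norm continuity of the resolvent section.

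Once this norm continuity is in hand, outer continuity of $\{\sigma(H^\ep)\}_\ep$ follows from Rieffel's stability theorem \cite{Ri} for continuous fields of $C^*$-algebras applied to the affiliated resolvent, combined with the spectral mapping above. For inner continuity I would strengthen the analysis to obtain strong continuity in $\ep$ of the operators $(H^\ep+\la)^{-1}=\Op^{A^\ep}(r^\ep)$ on $L^2(\X)$. Choosing a gauge in which $\ep\mapsto A^\ep$ is continuous into smooth vector potentials — for instance the transverse gauge $A^\ep_k(x)=-\sum_j\int_0^1 s\,B^\ep_{kj}(sx)x_j\,ds$, manifestly continuous in $\ep$ under Hypothesis \ref{hyp-B} — and applying dominated convergence to the integral formula (\ref{porc}), together with the uniform operator bound from (iii), I obtain $\Op^{A^\ep}(r^\ep)u\to\Op^{A^{\ep_0}}(r^{\ep_0})u$ in $L^2(\X)$ for every $u\in\mathcal{S}(\X)$, and hence for every $u\in L^2(\X)$ by density. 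The general criterion of \cite{BIT} then turns this strong continuity of the regular representations into inner continuity of the spectra, completing the proof.
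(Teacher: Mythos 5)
Your overall architecture matches the paper's: transport the problem to the intrinsic symbol algebra, prove that the resolvent symbol $r^\ep$ is affiliated to a twisted crossed product, realize the family $\{\mathfrak{B}^\ep\}_{\ep\in I}$ as a continuous field over $I$ in Rieffel's sense, then get outer continuity from upper semi-continuity of $\ep\mapsto\|\mathfrak{e}_\ep(\widetilde r)\|$ and inner continuity from strong continuity in a representation. Two places where you diverge from the paper are worth flagging. First, for the lower semi-continuity/inner continuity step, you propose fixing an $\ep$-continuous gauge (e.g.\ the transverse gauge) and proving strong continuity of $\ep\mapsto\mathfrak{Op}^{A^\ep}(r^\ep)$ on $L^2(\mathcal{X})$ by dominated convergence in \eqref{porc}. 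The paper instead uses the gauge-free \emph{left regular representation} $\Pi^\ep$ on $L^2(\mathcal{X}\times\mathcal{X})$ (formula \eqref{reg-repr} and Lemma \ref{strong-cont}), precisely because the authors want to avoid any reference to a vector potential, which can be unbounded even when $B^\ep$ is nice; your route is likely salvageable (the phase $\lambda^{A^\ep}$ is unimodular, and $\mathfrak{F}^-r^\ep$ lives in $L^1(\mathcal{X};BC_u(\mathcal{X}))$, so Schur-type estimates plus dominated convergence should close the argument), but it has an extra layer of gauge-dependent bookkeeping that the paper deliberately sidesteps. Second, be careful about claiming "norm continuity of the section" outright: Rieffel's Theorem 2.4 gives only \emph{upper} semi-continuity of the fiber norms, and the paper's proof is intentionally asymmetric — upper semi-continuity from the continuous-field structure, lower semi-continuity separately from strong continuity; your items (i)--(iii), especially the uniform Calder\'on--Vaillancourt bound with joint $\ep$-continuity of $\sharp^{B^\ep}$, do gesture at the right ingredients, but they do not by themselves give $\ep\mapsto\|r^\ep\|_{\mathfrak{B}^\ep}$ continuous without essentially reproving what Proposition \ref{prop-aff} and Corollary \ref{cor-aff} establish, namely that $\widetilde r_{\mathfrak z}$ lies in $\mathfrak{F}[\widetilde{\mathcal{L}}]$ with $\mathfrak{e}_\ep(\widetilde r_{\mathfrak z})=r^\ep_{\mathfrak z}$ and $\ep\mapsto r^\ep_{\mathfrak z}$ continuous in the $L^1$-algebra norm. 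That affiliation step (the oscillatory-integral estimates of Lemma \ref{M-est}, the cocycle bounds of Lemma \ref{B-est}, and the one-step Neumann argument of Proposition \ref{prop-aff}) is the technical core of the paper and is only sketched in your plan.
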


\medskip
Of course, if one only asks continuity conditions on the families $\{B^\ep\}_{\ep\in I}$ and $\{h^\ep\}_{\ep\in I}$ at
some point $\ep_0\in I$, the (outer and inner) continuity of the family of spectra will only be guaranteed at $\ep_0$.

Let us briefly comment upon the significance of Theorem \ref{T-M}:
\begin{itemize}
\item It extends the results in \cite{El,Be} to the case of
continuous models (with configuration space
$\mathcal{X}=\mathbb{R}^n$) and non-constant magnetic fields. We
mention in this context that our objects are no longer elements of
a crossed product but only unbounded observables affiliated to
twisted crossed-products with twisting cocycle in a rather complicated (not locally compact) group.
\item It extends the known results
\cite{Ne,If} to the class of elliptic symbols of any form and of
any strictly positive order. Notice that for Schr\"{o}dinger type
operators ($h^\ep(x,\xi)=\xi^2+V^\ep(x)$) the condition that the
components of the magnetic field should be smooth may be very much
weakened as we are going to show in future publication. \item It
is the first step in the study of the regularity of the spectral
bands and gaps with respect to variation of the magnetic field
(see \cite{Be}).
\end{itemize}

Our paper is devoted to the proof of Theorem \ref{T-M} and has the following structure.
In the next Section we present an abstract argument (following ideas and arguments in \cite{Be,El})
relating the statement of Theorem \ref{T-M} to the continuity of the symbols of the resolvents
of the family $\{H^\ep\}_{\ep\in I}$ in some special family of $C^*$-algebras, reducing the proof of Theorem \ref{T-M} to that
of Theorem \ref{T-M-1}. The third Section is devoted to
our main technical result proving the affiliation of the family $\{H^\ep\}_{\ep\in I}$
to a specific twisted crossed product $C^*$-algebra. In the 4-th Section we use the results in \cite{Ri} to prove that
this last twisted crossed product $C^*$-algebra is in fact an algebra of continuous sections in a field of $C^*$-algebras
and this is shown to be equivalent to our Theorem \ref{T-M-1}, thus finishing the proof of Theorem \ref{T-M}.

\section{The abstract part of the proof}

The abstract step in proving Theorem \ref{T-M} is to show how the norm of the resolvent
$R^\ep(\mathfrak{z}):=(H^\ep-\mathfrak{z}1)^{-1}$ is relevant for spectral continuity. In fact we have the following result.

\begin{proposition}\label{bazaka}
Suppose that $\{H^\ep\}_{\ep\in I}$ is a family of self-adjoint operators in the Hilbert space $\H$ such that for any
$\mathfrak{z}\notin\mathbb R$ the map
$$
I\ni\ep\mapsto\left\|\left(H^\ep-\mathfrak{z}1\right)^{-1}\right\|\in\mathbb{R}_+
$$
is upper (resp. lower) semi-continuous in $\ep_0\in I$. Then the spectra $\{\sigma(H^\ep)\}_{\ep\in I}$ form an outer
(resp. inner) continuous family of closed sets at the point $\ep_0\in I$.
\end{proposition}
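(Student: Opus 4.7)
The plan is to translate everything into a statement about $\mathrm{dist}(\mathfrak{z},\sigma(H^\ep))$ via the spectral-theoretic identity
\[
\bigl\|(H-\mathfrak{z})^{-1}\bigr\|\,=\,\frac{1}{\mathrm{dist}(\mathfrak{z},\sigma(H))},
\]
valid for any self-adjoint $H$ and any $\mathfrak{z}\notin\sigma(H)$, in particular for every $\mathfrak{z}\in\mathbb{C}\setminus\mathbb{R}$. Through this identity, upper semi-continuity of $\ep\mapsto\|(H^\ep-\mathfrak{z})^{-1}\|$ at $\ep_0$ becomes lower semi-continuity of $\ep\mapsto\mathrm{dist}(\mathfrak{z},\sigma(H^\ep))$, and conversely. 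The task then reduces to controlling the real sets $\sigma(H^\ep)$ via complex test points sitting just above the real axis.

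For outer continuity I would start with a compact $K\subset\mathbb{R}$ disjoint from $\sigma(H^{\ep_0})$, set $2\delta:=\mathrm{dist}(K,\sigma(H^{\ep_0}))>0$, and for each $\lambda\in K$ consider the test point $\mathfrak{z}_\lambda:=\lambda+i\delta$, which sits at distance at least $\sqrt{5}\,\delta$ from $\sigma(H^{\ep_0})$. The hypothesis provides a neighborhood $V_\lambda$ of $\ep_0$ on which $\mathrm{dist}(\mathfrak{z}_\lambda,\sigma(H^\ep))>2\delta$, and a triangle inequality then yields a uniform gap of size $\delta$ between $\sigma(H^\ep)$ and the whole real interval $(\lambda-\delta,\lambda+\delta)$. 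Covering $K$ by finitely many such intervals and intersecting the corresponding neighborhoods finishes this part.

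For inner continuity I would pick $\lambda_0\in\mathcal{O}\cap\sigma(H^{\ep_0})$, a radius $r>0$ with $(\lambda_0-r,\lambda_0+r)\subset\mathcal{O}$ and a small $r'\in(0,r)$, and use $\mathfrak{z}:=\lambda_0+ir'$ as test point. This $\mathfrak{z}$ lies at distance at most $r'$ from $\sigma(H^{\ep_0})$; however, by Pythagoras, every real point within distance $\sqrt{r^2+r'^2}$ of $\mathfrak{z}$ sits inside $(\lambda_0-r,\lambda_0+r)$. Since $r'<\sqrt{r^2+r'^2}$ there is strict room, so the hypothesis yields a neighborhood $V$ of $\ep_0$ on which $\mathrm{dist}(\mathfrak{z},\sigma(H^\ep))<\sqrt{r^2+r'^2}$, which forces $\sigma(H^\ep)$ to meet $\mathcal{O}$.

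The only genuinely non-mechanical step is this change of perspective: turning hypotheses that test only at complex $\mathfrak{z}$ into statements about the real spectrum. Once the test points $\lambda+i\delta$ and $\lambda_0+ir'$ are chosen with imaginary part calibrated against the relevant real distances, the Pythagorean identity does all the geometric work; the rest is bookkeeping plus, for the outer case, one invocation of the compactness of $K$ to upgrade pointwise control into uniform control.
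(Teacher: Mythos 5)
Your proof is correct, and it takes a genuinely different route from the paper. The paper approximates an arbitrary $\chi\in C_0(\mathbb{R})$ by linear combinations of the resolvent functions $\kappa_{\mathfrak{z}}(t)=(t-\mathfrak{z})^{-1}$ via Stone--Weierstrass, thereby transferring the assumed semi-continuity from $\ep\mapsto\|R^\ep(\mathfrak{z})\|$ to $\ep\mapsto\|\chi(H^\ep)\|$ for every $\chi\in C_0(\mathbb{R})$, and then uses Urysohn bump functions together with $\|\chi(H^\ep)\|=\sup_{\mu\in\sigma(H^\ep)}|\chi(\mu)|$ to detect gaps and islands in the spectrum. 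You instead exploit the explicit identity $\|(H-\mathfrak{z})^{-1}\|=\mathrm{dist}(\mathfrak{z},\sigma(H))^{-1}$ for self-adjoint $H$, which converts the hypothesis into a semi-continuity statement about distances to the spectrum, and then runs a purely geometric argument with test points $\lambda+i\delta$ sitting just above the real axis, with one compactness step to upgrade pointwise control to uniform control on $K$. Your route is more elementary (no Stone--Weierstrass, no Urysohn) and gives a very transparent picture of what the $\mathfrak{z}$-test points are actually detecting; the paper's route is slightly more robust in spirit, since the functional-calculus transfer would still go through in settings where the exact distance formula is awkward to invoke directly, and it more visibly sets the stage for the $C^*$-algebraic affiliation machinery used in the rest of the paper. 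One tiny overclaim in your write-up: with $|\lambda-\mu|>\sqrt{3}\,\delta$ for $\mu\in\sigma(H^\ep)$, the gap between $\sigma(H^\ep)$ and the interval $(\lambda-\delta,\lambda+\delta)$ is $(\sqrt{3}-1)\delta$, not $\delta$; this is harmless, since only disjointness is needed.
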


\begin{corollary}
If  for any $\mathfrak{z}\notin\mathbb R$ the map
$$
I\ni\ep\mapsto\left\|\left(H^\ep-\mathfrak{z}1\right)^{-1}\right\|\in\mathbb{R}_+
$$
is continuous, then the family $\{\sigma(H^\ep)\}_{\ep\in I}$ is both outer and inner continuous.
\end{corollary}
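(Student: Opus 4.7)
The starting observation is that for any self-adjoint $H^\ep$ and any $\mathfrak{z}\in\mathbb{C}\setminus\mathbb{R}$, the spectral theorem gives the familiar identity
\begin{equation*}
\left\|(H^\ep-\mathfrak{z})^{-1}\right\|\;=\;\frac{1}{\mathrm{dist}(\mathfrak{z},\sigma^\ep)}.
\end{equation*}
Under this identification, upper (resp.\ lower) semi-continuity in $\ep$ of $\|(H^\ep-\mathfrak{z})^{-1}\|$ is equivalent to lower (resp.\ upper) semi-continuity in $\ep$ of the distance $\mathrm{dist}(\mathfrak{z},\sigma^\ep)$. The whole proof then reduces to converting these semi-continuity statements for a distance function into the corresponding continuity properties of the closed sets $\sigma^\ep$. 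It will be convenient to specialize throughout to $\mathfrak{z}=\la+i$ for $\la\in\mathbb{R}$, giving the auxiliary function $g^\ep(\la):=\|(H^\ep-(\la+i))^{-1}\|=(\mathrm{dist}(\la,\sigma^\ep)^2+1)^{-1/2}$, which has the two key features that $\la\in\sigma^\ep\Leftrightarrow g^\ep(\la)=1$ and $g^\ep(\la)\le 1$ always.

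For \emph{outer continuity}, I would assume the upper semi-continuity hypothesis and take a compact $K$ with $K\cap\sigma^{\ep_0}=\emptyset$, setting $\delta:=\mathrm{dist}(K,\sigma^{\ep_0})>0$. Then $g^{\ep_0}(\la)\le(\delta^2+1)^{-1/2}<1$ uniformly on $K$. The first resolvent identity combined with the self-adjointness of each $H^\ep$ yields the Lipschitz estimate $|g^\ep(\la)-g^\ep(\la')|\le|\la-\la'|$, uniformly in $\ep$. Covering $K$ by finitely many intervals of radius $r$ around points $\la_1,\dots,\la_N\in K$, with $r$ and a tolerance $\eta$ both chosen smaller than $\tfrac{1}{2}\bigl(1-(\delta^2+1)^{-1/2}\bigr)$, and applying the pointwise USC hypothesis at each $\la_j$ to obtain a neighborhood $V_j$ of $\ep_0$ on which $g^\ep(\la_j)\le(\delta^2+1)^{-1/2}+\eta$, the intersection $V:=\bigcap_j V_j$ (finite, hence a neighborhood) satisfies $g^\ep(\la)<1$ for every $\la\in K$ and $\ep\in V$; that is, $K\cap\sigma^\ep=\emptyset$.

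For \emph{inner continuity}, I would assume the lower semi-continuity hypothesis and take an open $\mathcal{O}$ meeting $\sigma^{\ep_0}$; pick $\la_0\in\mathcal{O}\cap\sigma^{\ep_0}$ and $r>0$ with $(\la_0-r,\la_0+r)\subset\mathcal{O}$. Since $g^{\ep_0}(\la_0)=1$ and $g^\ep\le 1$ always, the LSC hypothesis forces $g^\ep(\la_0)\to 1$, and hence $\mathrm{dist}(\la_0,\sigma^\ep)\to 0$, as $\ep\to\ep_0$; for $\ep$ sufficiently close to $\ep_0$ this distance drops below $r$, producing a point of $\sigma^\ep$ inside $\mathcal{O}$. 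The main technical difficulty is the outer-continuity step: the hypothesis only provides pointwise semi-continuity in $\mathfrak{z}$, which has to be promoted to a uniform statement over the compact set $K$, and this is exactly what the self-adjointness-based Lipschitz bound on $g^\ep$ makes possible via the finite subcover.
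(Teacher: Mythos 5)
Your proof is correct, and it takes a genuinely different route from the paper's. The paper argues entirely through the continuous functional calculus: via the resolvent equation and Stone--Weierstrass it passes from the hypothesis on $\|R^\ep(\mathfrak z)\|$ to the same semi-continuity for $\ep\mapsto\|\chi(H^\ep)\|$ for every $\chi\in C_0(\mathbb R)$, then uses Urysohn's lemma to produce a test function $\chi$ vanishing on $\sigma^{\ep_0}$ and $\equiv 1$ on $K$ (for outer continuity), resp.\ peaking at a point of $\sigma^{\ep_0}\cap\mathcal O$ and supported in $\mathcal O$ (for inner continuity). You instead exploit self-adjointness head-on through the explicit identity $\|(H^\ep-\mathfrak z)^{-1}\|=\mathrm{dist}(\mathfrak z,\sigma^\ep)^{-1}$, specialize to $\mathfrak z=\la+i$, and work with $g^\ep(\la)=(\mathrm{dist}(\la,\sigma^\ep)^2+1)^{-1/2}$; the resolvent-identity Lipschitz bound $|g^\ep(\la)-g^\ep(\la')|\le|\la-\la'|$, uniform in $\ep$ because $\|R^\ep(\la+i)\|\le 1$, plus a finite subcover of $K$ then upgrades the pointwise USC hypothesis to the needed uniform estimate, and the LSC direction is a direct pinching argument using $g^\ep\le 1=g^{\ep_0}(\la_0)$. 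What your route buys is a fully elementary, quantitative proof that sidesteps Stone--Weierstrass and Urysohn and makes the uniformity-over-$K$ step (a point the paper handles somewhat implicitly through the approximation of $\chi$) completely transparent; what the paper's route buys is that it does not lean on the specific self-adjoint distance formula and fits the soft $C^*$-algebraic framework used throughout the rest of the article. Both prove the stronger Proposition (USC $\Rightarrow$ outer, LSC $\Rightarrow$ inner), of which the Corollary is an immediate consequence, and both are complete as written.
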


\begin{proof}
For any $\mathfrak{z}\in\mathbb{C}\setminus\mathbb{R}$ the functions
$\kappa_\mathfrak{z}(t):=(t-\mathfrak{z})^{-1}$ belong to $C_0(\mathbb{R})$, i.e. they are continuous and small at infinity. Due to the Stone-Weierstrass Theorem and the
resolvent equation, their linear span is in fact an algebra and is dense in $C_0(\mathbb{R})$ for the norm
 $\|\cdot\|_\infty$. Thus, for any $\chi\in C_0(\mathbb{R})$ and for any $\delta>0$,
 there exist $N\in\mathbb{N}$, $a_j\in\mathbb{C}$ and $\mathfrak{z}_j\in\mathbb{C}\setminus\mathbb{R}$, with
 $j\in\{1,\ldots,N\}$, such that
\begin{equation}
 \|\chi-\sum_{j=1}^Na_j\,\kappa_{\mathfrak{z}_j}\|_\infty\leq\delta.
\end{equation}

By the functional calculus for self-adjoint operators we have
 $R^\ep(\mathfrak{z})=\kappa_{\mathfrak{z}}(H^\ep)$. We infer that for any $\chi\in C_0(\mathbb{R})$ the map $I\ni\ep\mapsto\|\chi(H^\ep)\|\in\mathbb{R}_+$ has the
same semi-continuity property as the map
$I\ni\ep\mapsto\|R^\ep(\mathfrak{z})\|\in\mathbb{R}_+$.

Let us suppose now upper semi-continuity in $\ep_0$ and assume that $\sigma\left(H^{\ep_0}\right)\cap K
=\emptyset$ for some compact set $K$. By Urysohn's Lemma,
there exists $\chi\in C_0(\mathbb R)_+$ with $\chi|_K=1$ and $\chi\vert_{\sigma\left(H^{\ep_0}\right)}=0$,
so $\chi\left(H^{\ep_0}\right)=0$. Choose a neighborhood $V$ of $\ep_0$ such that for $\ep\in V$
$$
\parallel \chi(H^{\ep})\parallel\,\le\,\parallel\chi(H^{\ep_0})\parallel+\frac{1}{2}=\frac{1}{2}.
$$
If for some $\ep\in V$ there exists $\lambda\in K\cap\sigma\left(H^{\ep}\right)$, then
$$
1=\chi(\lambda)\le\sup_{\mu\in\sigma^{\ep}}\chi(\mu)=\,\parallel \chi(H^{\ep})\parallel\,\le\frac{1}{2},
$$
which is absurd.

Let us assume now lower semi-continuity in $\ep_0$. Suppose
that there exist an open set $\mathcal{O}\subset\mathbb{R}$ such that
$\sigma\left(H^{\ep_0}\right)\cap\mathcal{O}\ne\emptyset$ and let $\lambda\in\sigma^\ep\cap\mathcal{O}$.
By Urysohn's Lemma there exist a positive function $\chi\in C_0(\mathbb{R})$ with
$\chi(\lambda)=1$ and $\text{supp}(\chi)\subset\mathcal{O}$; thus $\|\chi\left(H^{\ep_0}\right)\|\ge 1$.
Suppose moreover that for
any neighborhood $V\subset I$ of $\ep_0$ there exists $\ep\in V$ such that
$\sigma\left(H^{\ep}\right)\cap\mathcal{O}=\emptyset$ and thus $\chi\left(H^{\ep}\right)=0$. This clearly contradicts the lower
semi-continuity of $\ep\mapsto\|\chi\left(H^{\ep}\right)\|$. We conclude thus the inner continuity condition.
\end{proof}

Proving the continuity of the map
$I\ni\ep\mapsto\|R^\ep(\mathfrak{z})\|\in\mathbb{R}_+$ for any
$\mathfrak{z}\in\mathbb{C}\setminus\mathbb{R}$ is the aim of the
remaining part of the article. Our approach will be to work
intrinsically with the symbol
$r^\ep_{\mathfrak{z}}=\left(\mathfrak{Op}^{A^\ep}\right)^{-1}\!\!\left[R^\ep(\mathfrak{z})\right]$.
Clearly
$r^\ep_{\mathfrak{z}}\in\mathfrak{A}^\ep(\Xi)\equiv\mathfrak
A^{B^\ep}(\Xi)$ (it depends on the parameter $\ep\in I$ both
through the
 $\ep$-dependence of the symbol $h^\ep$ and through the $\ep$-dependence of the product $\sharp^{\ep}$, which is $B^\ep$
 -dependent) and
$\|r^\ep_{\mathfrak{z}}\|_\ep=\|R^\ep(\mathfrak{z})\|$ will be now
an $\ep$-dependent norm. However, no vector potential is in view
now. Our main technical result, Proposition \ref{prop-aff}, proven
in the next section allows us to control the inverse
$r^\ep_{\mathfrak{z}}$ of $h^\ep-\mathfrak{z}1$ in the Moyal
algebra $\mathfrak M^\ep(\Xi):= \mathfrak M^{B^\ep}(\Xi)$ with
respect to the product $\sharp^{\ep}\equiv\sharp^{B^\ep}$ for
$\ep$ fixed (a problem of affiliation), to prove that it belongs
in fact to some smaller algebra and to control the
$\ep$-dependence of the norms of the elements
$r^\ep_{\mathfrak{z}}\in\mathfrak C^\ep\subset
\mathfrak{A}^\ep(\Xi)$. These results will allow us to place
ourselves in the setting of continuous fields of $C^*$-algebras.
We shall prove the following statement

\begin{theorem}\label{T-M-1}
Suppose given a family of symbols $\{h^\ep\}_{\ep\in I}$
satisfying Hypothesis \ref{hyp-h} and a family of magnetic fields
$\{B^\ep\}_{\ep\in I}$ satisfying Hypothesis \ref{hyp-B}, then,
for any choice of vector potentials $\{A^\ep\}_{\ep\in I}$
associated to the magnetic fields $B^\ep$ ($B^\ep=dA^\ep$) and for
any $\mathfrak{z}\in\mathbb{C}\setminus\mathbb{R}$ the map
$$
I\ni\ep\mapsto\left\|\left(\mathfrak{Op}^{A^\ep}(h^\ep)-\mathfrak{z}1\right)^{-1}\right\|\in\mathbb{R}_+
$$
is continuous.
\end{theorem}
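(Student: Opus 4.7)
The plan is to eliminate the vector potential from the norm one needs to control, and then to exhibit $\{r^\ep_\mathfrak{z}\}_{\ep\in I}$ as a continuous section in a suitable continuous field of $C^*$-algebras assembled from the magnetic fields $\{B^\ep\}_{\ep\in I}$ alone. Since $\Op^{A^\ep}$ is a $*$-isomorphism onto $\mathbb{B}(\H)$ intertwining $\sharp^\ep$ with operator multiplication, if one produces an element $r^\ep_\mathfrak{z}\in\CC^\ep\subset\mathfrak{A}^{B^\ep}(\Xi)$ inverting $h^\ep-\mathfrak{z}$ with respect to $\sharp^\ep$ inside the magnetic Moyal algebra, then $\Op^{A^\ep}(r^\ep_\mathfrak{z})=R^\ep(\mathfrak{z})$ and hence $\|r^\ep_\mathfrak{z}\|_\ep=\|R^\ep(\mathfrak{z})\|$; the vector potential has disappeared. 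Such an $r^\ep_\mathfrak{z}$ is provided, for each $\ep$ fixed, by the affiliation result (Proposition \ref{prop-aff}) to be proved in Section 3.

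The next step is to assemble the fibers $\{\CC^\ep\}_{\ep\in I}$ into a continuous field of $C^*$-algebras over $I$. The algebras $\CC^\ep$ have a realization as twisted crossed products in which the $2$-cocycle is built from the magnetic flux $\Omega^{B^\ep}$ through triangles in $\X$. The explicit formula (\ref{Omega-parametric}) together with Hypothesis \ref{hyp-B} show that $\ep\mapsto\Omega^{B^\ep}$ is continuous in the appropriate topology, so the general construction of \cite{Ri} applies and yields a $C(I)$-algebra whose fiber at $\ep$ is $\CC^\ep$, together with a sufficiently rich family of continuous sections generated by twisted convolution of jointly continuous kernels on $I\times\X$. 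In this field, $\ep\mapsto h^\ep-\mathfrak{z}$ becomes a continuous section (combining Hypothesis \ref{hyp-h} with the continuity of $\sharp^\ep$), and the uniform ellipticity together with the uniform lower bound from Hypothesis \ref{hyp-h} furnishes uniform-in-$\ep$ control of $\|r^\ep_\mathfrak{z}\|_\ep$ on compact neighborhoods of any $\ep_0\in I$.

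A resolvent-type identity, formally written
\[
r^\ep_\mathfrak{z}-r^{\ep_0}_\mathfrak{z}=r^\ep_\mathfrak{z}\,\sharp^\ep\,(h^{\ep_0}-h^\ep)\,\sharp^{\ep_0}\,r^{\ep_0}_\mathfrak{z},
\]
interpreted inside the ambient continuous field, then delivers the norm-continuity of the section $\ep\mapsto r^\ep_\mathfrak{z}$: the Fr\'echet continuity of $h^\ep$ in $S^m_1(\Xi)$ controls the middle factor, and the uniform bounds of the previous paragraph control the outer factors. Since norms of continuous sections of a continuous field of $C^*$-algebras are continuous functions on the base, we obtain that $\ep\mapsto\|r^\ep_\mathfrak{z}\|_\ep=\|R^\ep(\mathfrak{z})\|$ is continuous at every point of $I$, which is precisely the claim of Theorem \ref{T-M-1}.

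The principal obstacle lies in the second step. The products $\sharp^\ep$ a priori live in different Moyal algebras, so one must verify that Rieffel's machinery genuinely applies to the particular magnetic cocycles produced by (\ref{Omega-parametric}) and to the specific $C^*$-subalgebras $\CC^\ep$ chosen to contain the resolvent symbols $r^\ep_\mathfrak{z}$ (large enough to capture the affiliation, small enough to assemble into a continuous field). Once the continuous-field structure is in place and the family of resolvent symbols is identified as a continuous section of it, the final statement about continuity of norms is essentially automatic from general $C^*$-algebraic principles.
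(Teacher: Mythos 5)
The overall architecture you propose---replace the resolvent by its magnetic symbol $r^\ep_\mathfrak{z}$, assemble the fiber algebras into a continuous field over $I$ via Rieffel's construction, identify $\ep\mapsto r^\ep_\mathfrak{z}$ as a section, and read off norm continuity---matches the paper's at the coarse level, but there are two concrete gaps in the way you close the argument.

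First, you treat the existence of a continuous field as something that Rieffel's machinery hands you outright and then invoke the slogan ``norms of continuous sections of a continuous field are continuous.'' What the paper actually extracts from \cite{Ri} (Theorem 2.4, Propositions 1.2 and 2.3) is only \emph{upper} semi-continuity of $\ep\mapsto\|\mathfrak{e}_\ep(F)\|_{\mathfrak{B}^\ep}$; this is the generic behaviour one gets from the $C_0(I)$-algebra structure of $\mathfrak{B}=C(I;BC_u(\mathcal{X}))\rtimes^{\widetilde{\omega}}_{\widetilde{\theta}}\mathcal{X}$. The \emph{lower} semi-continuity is not automatic here and has to be proved separately: the paper does this by showing strong continuity of $\ep\mapsto\phi^\ep\diamond^\ep\psi$ in the left regular representation on $L^2(\mathcal{X}\times\mathcal{X})$ (Lemma \ref{strong-cont}), then using that strong continuity of a family of bounded operators implies lower semi-continuity of the operator norm. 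Your argument has no substitute for this second half, so it never rules out a sudden drop of the norm at $\ep_0$.

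Second, the resolvent-type identity
\[
r^\ep_\mathfrak{z}-r^{\ep_0}_\mathfrak{z}=r^\ep_\mathfrak{z}\,\sharp^\ep\,(h^{\ep_0}-h^\ep)\,\sharp^{\ep_0}\,r^{\ep_0}_\mathfrak{z}
\]
does not hold as written, and I do not see how to make sense of it ``inside the ambient continuous field.'' The two outer products live in different algebras and there is no common associative framework in which the right-hand side collapses to the left: the usual identity $(A-z)^{-1}-(B-z)^{-1}=(A-z)^{-1}(B-A)(B-z)^{-1}$ relies on a single multiplication, and here both the symbol $h^\ep$ \emph{and} the product $\sharp^\ep$ change with $\ep$; the symbol difference alone does not account for the change in the twisting cocycle. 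The paper sidesteps this entirely by working in the single $C^*$-algebra $\mathfrak{B}$ over $I\times\mathcal{X}$ and proving (Proposition \ref{prop-aff}, point 2) that there is a single element $\widetilde{r}_\mathfrak{z}\in\mathfrak{F}[\widetilde{\mathcal{L}}]\subset\mathfrak{B}$ with $\mathfrak{e}_\ep(\widetilde{r}_\mathfrak{z})=r^\ep_\mathfrak{z}$; the $\ep$-continuity in the $L^1$-type norm (Corollary \ref{cor-aff}) is then a one-line consequence of $\widetilde{\mathcal{L}}\subset C(I;\mathcal{L})$. You need an analogue of this hard analytic input---the uniform-in-$\ep$ control of the resolvent symbol inside $\mathfrak{F}[\widetilde{\mathcal{L}}]$, which in the paper comes from the oscillatory-integral estimates of Lemma \ref{M-est} and the parametrix-type construction in Proposition \ref{prop-aff}---rather than a formal resolvent identity.
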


Thus, by the discussion above, we conclude that our Theorem \ref{T-M} is true.

\medskip
The main tool in proving Theorem \ref{T-M-1} will be to embed our
symbol algebras depending on $\ep\in I$ as "continuous sections in
the direct product" $\coprod_{\ep\in
I}\mathfrak{A}^\ep(\Xi)\rightarrow I$. We shall constantly use the
notation $BC_u(\mathcal{X})$ for the abelian $C^*$-algebra of all
bounded uniformly continuous complex functions on $\X$. We shall
construct these "continuous sections" by considering the twisted
crossed-products $BC_u(\mathcal{X})\rtimes^{\omega^\ep}_\theta\mathcal{X}$ for each
$\ep\in I$ (studied in \cite{MP1,MPR1}); here $\theta$
denotes the natural action of $\mathcal{X}$ on $BC_u(\mathcal{X})$
by translations and $\omega^\ep\equiv\omega^{B^\ep}$ is a group $2$-cocycle to be introduced below.

Let us consider the inverse partial Fourier transform
\begin{equation}
 \mathfrak{F}^-:\mathcal{S}(\Xi)\rightarrow\mathcal{S}(\mathcal{X}\times\mathcal{X}),\ \ \
 \left[\mathfrak{F}^-f\right](x,y):=\int_{\mathcal{X}^*}d\xi\,e^{i\xi\cdot y}f(x,\xi)
\end{equation}
(extended to $\mathcal{S}'(\Xi)$ and $L^2(\Xi)$). We can transport
the Moyal product $\sharp^B$ to a bilinear associative product on
$\mathcal{S}(\mathcal{X}\times\mathcal{X})$ and
$\mathfrak{F}^-\mathfrak{M}^B(\Xi)$ that we denote by
$\diamond^B$:
\begin{equation}
 \phi\diamond^B\psi:=\mathfrak{F}^-\left[(\mathfrak{F}\phi)\,\sharp^B(\mathfrak{F}\psi)\right].
\end{equation}
A simple computation gives
\begin{equation}\label{d-diam}
\left[\phi\diamond^B\psi\right](x,y)=\int_{\mathcal{X}}dz\,\phi(x+(z-y)/2,z)\,\psi(x+z/2,y-z)\,\omega^B(x-y/2;z,y-z),
\end{equation}
where
\begin{equation}\label{def-omega}
\omega^B(x;y,z):=\exp\{(-i\gamma^B(x,y,z))\}\in C\big(\mathcal{X};\mathbb{U}(1)\big)
\end{equation}
and $\gamma^B(x,y,z)$ is the flux of $B$ through the triangle
$<x,x+y,x+y+z>$. The group $C\big(\mathcal{X};U(1)\big)$
can be identified with the group of all the unitary elements in the $C^*$-algebra $BC_u(\X)$.

It is easy to see that the Banach space $\mathfrak
L:=L^1\big(\mathcal{X};BC_u(\mathcal{X})\big)$ is contained in
$\mathfrak{F}^-\left[\mathfrak{A}^B(\Xi)\right]$ and is also a
Banach $^*$-algebra under the multiplication $\diamond^B$ and the involution given by
$$
\phi^*(x;y)\equiv[\phi^*(y)](x):=\overline{\phi(x;-y)}.
$$

Let $\mathfrak{C}^B$ be the closure of $\mathfrak{F}\big(\mathfrak{L}\big)$ in $\mathfrak{A}^B\big(\Xi\big)$
(with the product $\sharp^B$). We shall also consider the
$C^*$-algebra $\mathfrak{B}^B:=\mathfrak{F}^-\mathfrak{C}^B$ (for
the product $\diamond^{B}$), that will be contained in
$\mathcal{S}^\prime(\mathcal{X}\times\mathcal{X})$. This
$C^*$-algebra is exactly the twisted crossed-product
$BC_u(\mathcal{X})\rtimes^{\omega^B}_\theta\mathcal{X}$ associated
to $BC_u(\mathcal{X})$, the action $\theta$ by translations of
$\mathcal{X}$ on $BC_u(\mathcal{X})$ and the 2-cocycle $\omega^B$
\cite{PR1,PR2,MPR1} and also the enveloping $C^*$-algebra of the
Banach $^*$-algebra $\mathfrak L$. Let us strengthen that the two
$C^*$-algebras $\mathfrak{B}^B$ and $\mathfrak{C}^B$ are
isomorphic. The constructions above can be performed for any of
the magnetic fields $B^\ep$, $\ep\in I$. We are going to use the
abbreviations $\mathfrak C^\ep:=\mathfrak C^{B^\ep}$, $\mathfrak
B^\ep:=\mathfrak B^{B^\ep}$.

In estimating some $C^*$-norms we shall need a special
representation, the left regular representation
\begin{equation}\label{reg-repr}
\Pi^\epsilon:\mathfrak{B}^\ep\rightarrow\mathbb{B}\big[L^2(\mathcal{X}\times\mathcal{X})\big],\
\quad\Pi^\epsilon(\phi)\,\psi:=\phi\diamond^\ep
\psi,\quad\forall\phi\in\mathfrak{B}^\ep,\quad\forall\psi\in\mathcal{H}:=L^2(\mathcal{X}\times\mathcal{X}).
\end{equation}
It really defines a representation, as one can easily notice using the results in \cite{MP1}.

We close this section by introducing a new algebra of
$\ep$-dependent symbols. We want to 'glue' all the 2-cocycles
$\omega^\ep(y,z)\in C(\mathcal{X};U(1))$ for $\ep\in I$ in a
single 2-cocycle with values in a larger group
$C(I\times\mathcal{X};U(1))$. This obliges us to also enlarge the
unital abelian algebra $BC_u(\mathcal{X})$ to the unital abelian
algebra $BC_u(I\times\mathcal{X})=C\big(I;BC_u(\mathcal{X})\big)$.
Let us consider the natural action $\widetilde{\theta}$ by
translations of $\mathcal{X}$ on $C\big(I;BC_u(\mathcal{X})\big)$,
given explicitely by
$\left[\widetilde{\theta}(x)f\right](\ep,z):=f(\ep,z+x)$, the
2-cocycle
$\widetilde{\omega}:\mathcal{X}\times\mathcal{X}\rightarrow
C(I\times\mathcal{X};U(1))$ given by
\begin{equation}\label{tilde-omega}
\left[\widetilde{\omega}(y,z)\right](\ep,x):=\left[\omega^\ep(y,z)\right](x)
\end{equation}
and the following composition law (similar to (\ref{d-diam})) on
$C_c\Big(\mathcal{X};C\big(I;BC_u(\mathcal{X})\big)\Big)$:
\begin{equation}\label{d-diam_tilde}
\left[\widetilde{\phi}\diamond\widetilde{\psi}\right](\ep,x,y):=\int_{\mathcal{X}}dz\,\widetilde{\phi}(\ep,x+(z-y)/2,z)
\,\widetilde{\psi}(\ep,x+z/2,y-z)\,\left[\widetilde{\omega}(z,y-z)\right](\ep,x-y/2).
\end{equation}
Taking the closure of
$C_c\Big(\mathcal{X};C\big(I;BC_u(\mathcal{X})\big)\Big)$ with respect to the norm
$$
\|\widetilde{\phi}\|_{\widetilde{\mathcal{L}}}:=\int_{\mathcal{X}}dx\
\|\widetilde{\phi}(x)\|_{C\big(I;BC_u(\mathcal{X})\big)}=
\int_{\mathcal{X}}dx\ \underset{\ep\in
I}{\sup}\left\|\left[\widetilde{\phi}(x)\right](\ep)\right\|_{BC_u(\mathcal{X})}=\int_{\mathcal{X}}dx\
\underset{\ep\in
I}{\sup}\underset{y\in\mathcal{X}}\,{\sup}\left|\left[\widetilde{\phi}(x)\right](\ep;y)\right|
$$
we obtain the space $\widetilde{\mathcal{L}}=
L^1\Big(\mathcal{X};C\big(I;BC_u(\mathcal{X})\big)\Big)$ that is a
Banach algebra for the composition (\ref{d-diam_tilde}). Let us consider its $C^*$-envelope that will be a crossed-product
$$
\mathfrak{B}:=C\big(I;BC_u(\mathcal{X})\big)\rtimes^{\widetilde{\omega}}_{\widetilde{\theta}}\mathcal{X}.
$$
We can then also define the isomorphic $C^*$-algebra
$\mathfrak{C}:=\mathfrak{F}\left[\mathfrak{B}\right]$.

An important remark is that for any $\ep\in I$ we have a natural
{\it evaluation} map
$$\mathfrak{e}_\ep:C_c\Big(\mathcal{X};C\big(I;BC_u(\mathcal{X})\big)\Big)\rightarrow
C_c\big(\mathcal{X};BC_u(\mathcal{X})\big),\quad\mathfrak{e}_\ep\big(\widetilde{\phi}\big)(x):=\left[\widetilde{\phi}(x)\right](\ep)\in
BC_u(\mathcal{X}),
$$
that extends by continuity to a contractive and surjective
projection (that we shall denote by the same symbol)
$\mathfrak{e}_\ep:\widetilde{\mathcal{L}}\rightarrow\mathcal{L}$
and to a contractive $C^*$-homomorphism
$\mathfrak{e}_\ep:\mathfrak{B}\rightarrow\mathfrak{B}^\ep$.

\section{An affiliation result}
As mentioned in the Introduction, in this Section we prove the affiliation of our family of Hamiltonians
$\{H^\ep\}_{\ep\in I}$ to a specific twisted crossed product $C^*$-algebra. Unfortunately,
no one of the affiliation results we have proved
already (see \cite{MPR2}, \cite{LMR}) implies directly Proposition
\ref{prop-aff}. Therefore we decided to give a full proof of the
statement. We shall mainly follow the arguments in \cite{MPR2},
keeping trace of the $\ep$-dependence and adding the necessary
technicalities in order to deal with $x$-dependent symbols.

 We first need notations for the norms defining the Fr\'echet topologies on various spaces.
 Let $C_{\text{\sf pol}}^\infty(\mathcal{X})$ be the space of indefinitely differentiable functions having polynomial growth as well as their derivatives. For $\varphi\in C_{\text{\sf pol}}^\infty(\mathcal{X})$, with $p\in\mathbb{R}$
 and $N\in\mathbb{N}$, we denote
 $$
 \|\varphi\|_{(\mathcal{X},p,N)}:=\underset{|a|\leq N}{\max}\,\underset{x\in\mathcal{X}}{\sup}\left\vert\left<x\right>^{-p}
 \big(\partial^a_x\varphi\big)(x)\right\vert;
 $$
thus the family
$\|\varphi\|_{(\mathcal{X},N)}:=\|\varphi\|_{(\mathcal{X},0,N)}$,
with $N\in\mathbb{N}$, defines the Fr\'echet topology on
$BC^\infty(\mathcal{X})$;
we also denote by
$\|\varphi\|_\infty:=\|\varphi\|_{(\mathcal{X},0)}$ the usual norm
on $BC(\mathcal{X})$. Associated to the above norms we can also consider
$$
\|\phi\|_{(\mathcal{X},p,N),(\mathcal{X},q,M)}:=\underset{|a|\leq N}{\max}\,\underset{|b|\leq M}{\max}
\,\underset{y\in\mathcal{X}}{\sup}\,\underset{z\in\mathcal{X}}{\sup}\,\left\vert\left<y\right>^{-p}\left<z\right>^{-q}
 \big(\partial^a_y\partial^b_z\phi\big)(y,z)\right\vert,
 $$
for all $\phi\in C_{\text{\sf
pol}}^\infty(\mathcal{X\times\mathcal{X}})$,
 for $p,q\in\mathbb R$ and $N,M\in\mathbb{N}$ and
$$
\|\widetilde\phi\|_{\infty,(\mathcal{X},p,N),(\mathcal{X},q,M)}:=\underset{x\in\mathcal{X}}
{\sup}\|\widetilde\phi(x)\|_{(\mathcal{X},p,N),
(\mathcal{X},q,M)}
$$
for any $\widetilde\phi\in
BC\Big(\mathcal{X};C_{\text{\sf
pol}}^\infty(\mathcal{X\times\mathcal{X}})\Big)$.
For $F\in S^{k_1,k_2}_{1,1}(\Xi\times\Xi)$ we shall need the
following family of norms
$$
\|F\|_{(k_1,N_1,M_1),(k_2,N_2,M_2)}:=
$$
$$
\underset{|a|\leq N_1}{\max}\,\underset{|\alpha|\leq
M_1}{\max}\,\underset{|b|\leq N_2}{\max}\,\underset{|\beta|\leq
M_2}{\max}\,\underset{Y\in\Xi}{\sup}\,\underset{Z\in\Xi}{\sup}\left|<\eta>^{-k_1+|\alpha|}<\zeta>^{-k_2+|\beta|}\left(\partial_y^a\partial_\eta^\alpha\partial_z^b
\partial_\zeta^\beta F^\ep\right)(Y,Z)\right|.
$$

Now let us introduce the main technical tool that will allow us to
estimate the oscillating integrals appearing in the computation of the symbol of the resolvent.

\begin{lemma}\label{M-est}
 We consider a function $\gamma\in C\Big(I;BC^\infty\big(\mathcal{X};C^\infty_{\hbox{\rm \tiny pol}}(\mathcal{X}\times\mathcal{X})\big)
 \Big)$ satisfying estimations of the form
\begin{equation}
C(\gamma)\equiv\underset{\ep\in
I}{\sup}\|\partial_x^\alpha\gamma^\ep\|_{\infty,(\mathcal{X},s_1(N_1,N_2),N_1),(\mathcal{X},s_2(N_1,N_2),N_2)}<\infty,\quad\forall\alpha\in\mathbb{N}^n
\end{equation}
and a function $F_\lambda\in
C\left(I;BC^\infty\big(\Xi;S^{k_1k_2}_{1,1}(\Xi\times\Xi)\big)\right)$
satisfying estimations of the form
\begin{equation}\label{l-est}
\underset{\ep\in
I}{\sup}\,\underset{x\in\mathcal{X}}{\sup}\left\Vert\left[\partial^b_x\partial^\alpha_\xi
F_\lambda^\ep\right](X)\right\Vert_{(k_1,N_1,M_1),(k_2,N_2,M_2)}\leq
C_{a,\beta}(F_\lambda)\left<\xi\right>^{-\rho-|\beta|}\lambda^{-\rho^\prime}
\end{equation}
for some strictly positive exponents $\rho$ and $\rho^\prime$.
Then the function
\begin{equation}\label{gF-est}
G(\ep;\lambda;X):=\int_{\Xi}\int_{\Xi}dYdZ\,e^{-2iz\cdot
\eta}\,e^{2iy\cdot\zeta}\,\gamma^\ep(x;y,z)\,F_\lambda^\ep(X;Y,Z)
\end{equation}
defines for each $\lambda>0$ an element of
$C\big(I;S^{-\rho}_1(\Xi)\big)$ and we have
$$
\underset{\ep\in
I}{\sup}\left\|G(\ep;\lambda;\cdot)\right\|_{\Xi,1,-\rho,N,M}\leq
C\lambda^{-\rho^\prime}.
$$
\end{lemma}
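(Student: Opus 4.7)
The plan is to treat $G(\epsilon;\lambda;X)$ as an oscillatory integral in the variables $(Y,Z)=(y,\eta,z,\zeta)$ with phase $\Phi(Y,Z):=-2z\cdot\eta+2y\cdot\zeta$, and to convert it to an absolutely convergent integral via integration by parts. The two pieces of the phase decouple: $-2z\cdot\eta$ involves only $(z,\eta)$, while $2y\cdot\zeta$ involves only $(y,\zeta)$. Accordingly, I would introduce four commuting regularizing operators
\[
L_1:=\langle z\rangle^{-2N_\eta}(1-\tfrac{1}{4}\Delta_\eta)^{N_\eta},\quad L_2:=\langle\eta\rangle^{-2N_z}(1-\tfrac{1}{4}\Delta_z)^{N_z},
\]
and their analogues $L_3,L_4$ interchanging the roles $(z,\eta)\leftrightarrow(y,\zeta)$, each of which leaves the corresponding exponential invariant. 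Transposing them onto $\gamma^\epsilon F_\lambda^\epsilon$ yields an equivalent expression for $G(\epsilon;\lambda;X)$ in which the exponents $N_\eta,N_z,N_y,N_\zeta$ may be taken as large as needed.

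The integers are to be chosen (depending on $s_1,s_2,k_1,k_2$) so that after Leibniz expansion every summand is absolutely integrable. The weights $\langle z\rangle^{-2N_\eta}$ and $\langle y\rangle^{-2N_y}$ defeat the polynomial growth of $\gamma^\epsilon$ in the $BC_\infty$-of-$C^\infty_{\text{\sf pol}}$ norm, while $\langle\eta\rangle^{-2N_z}$ and $\langle\zeta\rangle^{-2N_\zeta}$ together with the symbol decay of $F_\lambda^\epsilon$ in \eqref{l-est} render the $\eta,\zeta$ integrals convergent. Crucially, the $\partial_\eta$ and $\partial_\zeta$ derivatives coming from $L_1,L_3$ fall on $F_\lambda^\epsilon$ and, by \eqref{l-est}, \emph{gain} extra $\langle\eta\rangle^{-1},\langle\zeta\rangle^{-1}$ factors; the $\partial_z,\partial_y$ derivatives from $L_2,L_4$ distribute across $\gamma^\epsilon$ and $F_\lambda^\epsilon$ and produce at worst the constants $C(\gamma)$ and $C_{a,\beta}(F_\lambda)$. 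The scalar prefactor $\lambda^{-\rho'}$ is inherited unchanged from \eqref{l-est}.

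For the symbol norm estimate, I would differentiate under the integral: $\partial_x^a$ distributes by Leibniz onto $\gamma^\epsilon$ and $F_\lambda^\epsilon$, each piece controlled uniformly in $\epsilon$ by $C(\gamma)$ and $C_{a,\beta}(F_\lambda)$; $\partial_\xi^\alpha$ hits only $F_\lambda^\epsilon$, where the symbol estimate \eqref{l-est} supplies the required $\langle\xi\rangle^{-\rho-|\alpha|}$ factor. Taking the supremum over $X\in\Xi$ and over $\epsilon\in I$ yields the asserted bound with $\lambda^{-\rho'}$. Continuity of $\epsilon\mapsto G(\epsilon;\lambda;\cdot)$ as a map into $S^{-\rho}_1(\Xi)$ then follows from dominated convergence: the integrand after integration by parts admits an $\epsilon$-independent integrable majorant, so continuity of $\epsilon\mapsto\gamma^\epsilon$ and $\epsilon\mapsto F_\lambda^\epsilon$ in the hypothesized Fr\'echet topologies transfers to continuity of each $\partial_x^a\partial_\xi^\alpha G(\epsilon;\lambda;\cdot)$ in the weighted $\sup$-norm indexed by $(N,M)$.

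The main obstacle is purely bookkeeping: calibrating $N_\eta,N_z,N_y,N_\zeta$ as explicit functions of $s_1,s_2,k_1,k_2,N,M$ so that every one of the $O(2^{N_\eta+N_z+N_y+N_\zeta})$ Leibniz terms is dominated by a product of the hypothesized norms $\|\partial_x^\alpha\gamma^\epsilon\|_{\infty,(\mathcal{X},s_1,N_1),(\mathcal{X},s_2,N_2)}$ and the constants $C_{a,\beta}(F_\lambda)$, and that the accompanying $\langle\xi\rangle$-weights assemble into the required $S^{-\rho}_1$ estimate. No new idea beyond the standard oscillatory integral machinery is needed, but the indices must be tracked carefully since the polynomial growth $(s_1,s_2)$ of $\gamma^\epsilon$ is allowed to be arbitrary.
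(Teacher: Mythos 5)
Your proposal is correct and follows essentially the same path as the paper's proof: transpose weight-tempered regularizing operators adapted to the decoupled phase $e^{-2iz\cdot\eta}e^{2iy\cdot\zeta}$ onto $\gamma^\epsilon F_\lambda^\epsilon$, calibrate the orders against $s_1,s_2,k_1,k_2$ to obtain absolute integrability with the bound $\langle\xi\rangle^{-\rho}\lambda^{-\rho'}$, and pass $\partial_x^a\partial_\xi^\alpha$ under the integral by noting the derivatives of $\gamma^\epsilon$ and $F_\lambda^\epsilon$ retain the same structure. The paper uses iterated first-order directional operators such as $\langle\zeta\rangle^{-2}\bigl(1+\tfrac{1}{2i}\zeta\cdot\partial_y\bigr)$ in place of your Laplacian powers, and your $L_1,\dots,L_4$ do not in fact all commute (e.g.\ $L_1$ and $L_2$ both act on the $z$ and $\eta$ variables), but since they are applied in a fixed order, as in the paper, this is purely cosmetic.
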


\begin{proof}
 We shall introduce some integrable factors into the integral (\ref{gF-est}) by applying suitable differential
 operators to the phase factor $e^{-2iz\cdot\eta}e^{2iy\cdot\zeta}$. In fact we have
$$
\left<y\right>^{-2}\left(1+\frac{1}{2i}y\cdot\partial_\zeta\right)e^{2iy\cdot\zeta}=e^{2iy\cdot\zeta},\
\ \ \ \
\left<z\right>^{-2}\left(1-\frac{1}{2i}z\cdot\partial_\eta\right)e^{-2iz\cdot\eta}=e^{-2iz\cdot\eta},
$$
$$
\left<\eta\right>^{-2}\left(1-\frac{1}{2i}\eta\cdot\partial_z\right)e^{-2iz\cdot\eta}=e^{-2iz\cdot\eta},\
\ \ \ \
\left<\zeta\right>^{-2}\left(1+\frac{1}{2i}\zeta\cdot\partial_y\right)e^{2iy\cdot\zeta}=e^{2iy\cdot\zeta}.
$$
We integrate by parts in (\ref{gF-est}), first with respect to the
$(y,z)$ variables (obtaining the integrable powers in $\eta$ and
$\zeta$ and some growing factors in $(y,z)$) and then with respect
to the $(\eta,\zeta)$ variables obtaining the integrable factors
in $y$ and $z$ due to the symbol behavior of the function
$F_\lambda$. More precisely, after $N_1+N_2+M_1+M_2$ integration
by parts we obtain the equality:
$$
\int_{\Xi}\int_{\Xi}dYdZ\,e^{-2iz\cdot
\eta}\,e^{2iy\cdot\zeta}\gamma^\ep(x;y,z)F_\lambda^\ep(X;Y,Z)=
$$
$$
=\int_{\Xi}\int_{\Xi}dYdZ\,e^{-2iz\cdot
\eta}\,e^{2iy\cdot\zeta}\times
\left[\left<y\right>^{-2M_2}\left(1-\frac{1}{2i}y\cdot\partial_\zeta\right)^{M_2}\left<z\right>^{-2M_1}
\left(1+\frac{1}{2i}z\cdot\partial_\eta\right)^{M_1}\right.\times
$$
$$
\times\left.\left<\eta\right>^{-2N_2}\left(1+\frac{1}{2i}\eta\cdot\partial_z\right)^{N_2}\left<\zeta\right>^{-2N_1}
\left(1-\frac{1}{2i}\zeta\cdot\partial_y\right)^{N_1}\left(\gamma^\ep
F_\lambda^\ep\right)\right](X;Y,Z)=
$$
$$
=\int_{\Xi}\int_{\Xi}dYdZ\,e^{-2iz\cdot
\eta}\,e^{2iy\cdot\zeta}\times
\left[\left<y\right>^{-M_2}\left<z\right>^{-M_1}\left(\frac{1}{
\left<y\right>}-\frac{y}{2i\left<y\right>}\cdot\partial_\zeta\right)^{M_2}\left(\frac{1}{\left<z\right>}+
\frac{z}{2i\left<z\right>}\cdot\partial_\eta\right)^{M_1}\right.\times
$$
$$
\times\left.\left<\eta\right>^{-N_2}\left<\zeta\right>^{-N_1}\left(\frac{1}{\left<\eta\right>}+\frac{\eta}{2i
\left<\eta\right>}\cdot\partial_z\right)^{N_2}
\left(\frac{1}{\left<\zeta\right>}-\frac{\zeta}{2i\left<\zeta\right>}\cdot\partial_y\right)^{N_1}\left(\gamma^\ep
F_\lambda^\ep\right)\right](X;Y,Z),
$$
where the differential polynomials have coefficients of class
$BC^\infty(\Xi)$. This clearly implies the estimation
$$
\left|\int_{\Xi}\int_{\Xi}dYdZ\,e^{-2iz\cdot
\eta}\,e^{2iy\cdot\zeta}\,\gamma^\ep(x;y,z)\,F_\lambda^\ep(X;Y,Z)
\right|\leq
$$
$$
\leq C_1(N_1,N_2,M_1,M_2)\,\underset{\ep\in
I}{\sup}\|\gamma^\ep\|_{(\mathcal{X},s_1(N_1,N_2),N_1),(\mathcal{X},s_2(N_1,N_2),N_2)}\,\underset{\ep\in
I}{\sup}\|F^\ep(X)\|_{(k_1,N_1,M_1),(k_2,N_2,M_2)}\times
$$
$$
\times
\int_{\Xi}dY\int_{\Xi}dZ\left<y\right>^{-M_2}\left<z\right>^{-M_1}\left<\eta\right>^{-N_2}
\left<\zeta\right>^{-N_1}\left<y\right>^{s_1(N_1,N_2)}
\left<z\right>^{s_2(N_1,N_2)}\left<\eta\right>^{k_1}\left<\zeta\right>^{k_2}\leq
$$
$$
\leq
C(N_1,N_2,M_1,M_2)\,C(\gamma)\,C(F_\lambda)\left<\xi\right>^{-\rho}\lambda^{-\rho^\prime},
$$
for a choice of the form $N_1>k_1+n$, $\,N_2>k_2+n$,
$\,M_1>s_1(N_1,N_2)+n$, $\,M_2>s_2(N_1,N_2)+n$.

In order to finish the proof we only have to apply a derivation
operator of the form $\partial^a_x\partial^\alpha_\xi$ to our
function $G(\ep;\lambda;(x,\xi))$ (defined in (\ref{gF-est})) and
remark that
\begin{itemize}
 \item for any multi-index $b\in\mathbb{N}^n$ the function $\left(\partial^b_x\gamma^\ep\right)(x;y,z)$
verifies exactly the same properties as $\gamma^\ep(x;y,z)$; \item
for any multi-indices $b\in\mathbb{N}^n$ and
$\beta\in\mathbb{N}^n$ the function
$\left(\partial^b_x\partial^\beta F\right)^\ep_\lambda(X;Y,Z)$
verifies exactly the same properties as $F^\ep_\lambda(X;Y,Z)$.
\end{itemize}
\end{proof}

Let us remark that under Hypothesis \ref{hyp-B} the 'magnetic
phase factor' $\gB$ in the explicit formula of the Moyal product
satisfies the hypothesis on .the functions $\gamma$ in the statement of the above Lemma \ref{M-est}

\begin{lemma}\label{B-est}
Assume Hypothesis \ref{hyp-B}. Then:
\begin{enumerate}
\item[{\rm (a)}] for each $\ep\in I$ and $x \in\mathcal{X}$,
$\;\!\gB(x;\cdot,\cdot) \in C^\infty_{\hbox{\rm \tiny
pol}}(\mathcal{X}\times\mathcal{X})$; \item[{\rm (b)}] for each
$N_1,N_2\in \N$, and $\alpha\in\mathbb{N}^n$ there exist
$s_1(N_1,N_2)\geq 0$ and $s_2(N_1,N_2)\geq 0$ such that
\begin{equation*}
\underset{\ep\in
I}{\sup}\|\partial_x^\alpha\Omega^\ep\|_{\infty,(\mathcal{X},s_1(N_1,N_2),N_1),(\mathcal{X},s_2(N_1,N_2),N_2)}<\infty;
\end{equation*}
\item[{\rm (c)}] the map $I\ni\ep\mapsto\gB\in
BC^\infty\big(\mathcal{X};C^\infty_{\hbox{\rm \tiny
pol}}(\mathcal{X}\times\mathcal{X})\big)$ is continuous.
\end{enumerate}
\end{lemma}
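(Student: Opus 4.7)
The plan is to extract from the explicit formula $\Gamma^{B^\epsilon}(x,y,z)=4\sum_{j,k}y_j z_k\,\phi^\epsilon_{jk}(x,y,z)$, where
$$
\phi^\epsilon_{jk}(x,y,z):=\int_0^1ds\int_0^{1-s}dt\,B^\epsilon_{jk}\bigl(x-y-z+2sy+2tz\bigr),
$$
and then propagate everything through the exponential $\Omega^\epsilon=\exp(-i\Gamma^{B^\epsilon})$. First I would prove an auxiliary lemma on the $\phi^\epsilon_{jk}$: since $B^\epsilon_{jk}\in BC^\infty(\mathcal{X})$, differentiation under the integral sign is legitimate, and each derivative $\partial^\alpha_x\partial^a_y\partial^b_z$ applied to $\phi^\epsilon_{jk}$ produces an integral of $(\partial^{\alpha+a+b}B^\epsilon_{jk})$ evaluated at an affine combination of $x,y,z$, multiplied by bounded polynomial coefficients in $s,t\in[0,1]$. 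Hence all derivatives of $\phi^\epsilon_{jk}$ are bounded uniformly in $(x,y,z)$, with bounds dominated by semi-norms of $B^\epsilon_{jk}$ in $BC^\infty(\mathcal{X})$. Because $I\ni\epsilon\mapsto B^\epsilon_{jk}\in BC^\infty(\mathcal{X})$ is continuous on the compact interval $I$, every $BC^\infty$-semi-norm of $B^\epsilon_{jk}$ is uniformly bounded in $\epsilon$.

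Parts (a) and (b) then follow by Faà di Bruno's formula. Differentiating $\Gamma^{B^\epsilon}=4\sum y_j z_k\phi^\epsilon_{jk}$ gives a finite sum of terms polynomial in $(y,z)$ of degree at most $2$ with coefficients bounded in $(x,y,z)$ uniformly in $\epsilon$; in particular $|\partial^\alpha_x\partial^a_y\partial^b_z\Gamma^{B^\epsilon}(x,y,z)|\le C_{\alpha,a,b}\langle y\rangle^{\max(0,1-|a|)}\langle z\rangle^{\max(0,1-|b|)}$ uniformly in $\epsilon\in I$ (and even without those polynomial factors once enough derivatives fall on the $y_jz_k$ prefactor). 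Applying Faà di Bruno to $\Omega^\epsilon=e^{-i\Gamma^{B^\epsilon}}$ yields
$$
\partial^\alpha_x\partial^a_y\partial^b_z\Omega^\epsilon=\Omega^\epsilon\cdot P_{\alpha,a,b}\bigl(\partial^{\alpha'}_x\partial^{a'}_y\partial^{b'}_z\Gamma^{B^\epsilon}\bigr),
$$
where $P_{\alpha,a,b}$ is a universal polynomial in derivatives of $\Gamma^{B^\epsilon}$ of multi-order at most $(\alpha,a,b)$. Since $|\Omega^\epsilon|=1$, the growth of $\partial^\alpha_x\partial^a_y\partial^b_z\Omega^\epsilon$ in $(y,z)$ is polynomial, at a degree $s_1(N_1,N_2),s_2(N_1,N_2)$ that can be read off from the multinomial expansion of $P_{\alpha,a,b}$. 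The bounds depend only on the $BC^\infty$-semi-norms of the $B^\epsilon_{jk}$, which are uniform on $I$, giving exactly the statement of (b); in particular (a) follows.

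For part (c) I would write $\Omega^\epsilon-\Omega^{\epsilon_0}=\Omega^{\epsilon_0}\bigl(e^{-i(\Gamma^{B^\epsilon}-\Gamma^{B^{\epsilon_0}})}-1\bigr)$ and differentiate. The undifferentiated estimate uses $|e^{i\theta}-1|\le|\theta|$ together with $|\Gamma^{B^\epsilon}-\Gamma^{B^{\epsilon_0}}|\le 4|y|\,|z|\,\sum_{j,k}\|B^\epsilon_{jk}-B^{\epsilon_0}_{jk}\|_\infty$, so after weighting by $\langle y\rangle^{-s_1}\langle z\rangle^{-s_2}$ with $s_i\ge 1$ one gets convergence to zero uniformly in $(x,y,z)$. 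For a generic derivative $\partial^\alpha_x\partial^a_y\partial^b_z$ one combines the Faà di Bruno expansion above with the Leibniz rule to express the derivative of the difference as a sum of two types of terms: (i) $\Omega^{\epsilon_0}$ times a polynomial in derivatives of $\Gamma^{B^{\epsilon_0}}$ times a factor of the form $\partial^{\alpha'}_x\partial^{a'}_y\partial^{b'}_z(\Gamma^{B^\epsilon}-\Gamma^{B^{\epsilon_0}})$, and (ii) $(\Omega^\epsilon-\Omega^{\epsilon_0})$ times a polynomial in derivatives of $\Gamma^{B^\epsilon}$ or $\Gamma^{B^{\epsilon_0}}$. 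Type (i) is controlled by $\|B^\epsilon_{jk}-B^{\epsilon_0}_{jk}\|_{(\mathcal{X},N)}\to 0$ for appropriate $N$, while for type (ii) one iterates the previous estimate. All terms produced are polynomials in $(y,z)$ whose degrees only depend on $(\alpha,a,b)$, hence absorbed by choosing $s_1(N_1,N_2),s_2(N_1,N_2)$ large enough.

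The main technical obstacle is the bookkeeping in part (c): one must track, through Faà di Bruno, exactly which polynomial degrees in $(y,z)$ are generated and verify that a single pair $\bigl(s_1(N_1,N_2),s_2(N_1,N_2)\bigr)$ works simultaneously for all derivatives of order $\le(N_1,N_2)$ in $(y,z)$ and all $x$-derivatives. Compactness of $I$ enters crucially twice: to convert continuity of $\epsilon\mapsto B^\epsilon_{jk}$ into uniform boundedness of every $BC^\infty$-semi-norm on $I$, and to upgrade pointwise convergence of semi-norms into the continuity statement in the ambient Fréchet space.
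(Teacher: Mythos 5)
Your approach matches the paper's: the paper simply observes that Hypothesis 1.2 plus ``a simple examination'' of the parametric formula (1.6) for $\Gamma^{B}$ gives the result, referring to Lemma~4.2 of \cite{IMP} for details, and your argument (differentiate $\phi^\epsilon_{jk}$ under the integral sign, note $\Gamma^{B^\epsilon}=4\sum y_jz_k\phi^\epsilon_{jk}$, propagate through the exponential via Fa\`a di Bruno, and for (c) write $\Omega^\epsilon-\Omega^{\epsilon_0}=\Omega^{\epsilon_0}(e^{-i(\Gamma^{B^\epsilon}-\Gamma^{B^{\epsilon_0}})}-1)$) is exactly that examination carried out in detail. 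One small misstatement: the displayed bound $|\partial^\alpha_x\partial^a_y\partial^b_z\Gamma^{B^\epsilon}|\le C\langle y\rangle^{\max(0,1-|a|)}\langle z\rangle^{\max(0,1-|b|)}$ is false for $|a|\ge1$ (e.g.\ $\partial_{y_j}(y_jz_k\phi^\epsilon_{jk})=z_k\phi^\epsilon_{jk}+y_jz_k\partial_{y_j}\phi^\epsilon_{jk}$, which is still linear in $y$), and the correct statement is simply that every derivative of $\Gamma^{B^\epsilon}$ is $O(\langle y\rangle\langle z\rangle)$ uniformly in $\epsilon$; this is all your Fa\`a di Bruno bookkeeping actually uses, so the conclusion with $s_i(N_1,N_2)=N_1+N_2$ still holds and the proof is sound.
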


\begin{proof}
We use the explicit parametric form of $\gB$ in
(\ref{Omega-parametric}). Taking into account Hypothesis
\ref{hyp-B}, a simple examination of (\ref{Omega-parametric})
leads directly to the results. See also the proof of Lemma 4.2 in
\cite{IMP}.
\end{proof}

We come now to the main technical result of our paper.
\begin{proposition}\label{prop-aff}
 Under Hypothesis \ref{hyp-h} and \ref{hyp-B}, there exists some $a>0$ large enough such that for any
 $\mathfrak{z}\in\mathbb{C}\setminus[a,+\infty)$ we have:
\begin{enumerate}
\item for any $\ep\in I$, the function $h_\ep-\mathfrak{z}1\in
S^m_{1}(\Xi)\subset\mathfrak{M}^\ep(\Xi)$ is invertible for the
$\sharp^\ep$-product having an inverse $r^\ep_{\mathfrak{z}}\in
\mathfrak{F}\big[\mathfrak L\big]$; \item moreover the function
$I\times\Xi\ni(\ep,X)\mapsto
\widetilde{r}_{\mathfrak{z}}(\ep,X):=r^\ep_{\mathfrak{z}}(X)$
belongs to the algebra
$\mathfrak{F}\left[\widetilde{\mathcal{L}}\right]$ and
$\mathfrak{e}_\ep\big(\widetilde{r}_{\mathfrak{z}}\big)=r^\ep_{\mathfrak{z}}$.
\end{enumerate}
\end{proposition}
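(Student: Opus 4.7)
My plan is to realize $r^\epsilon_{\mathfrak{z}}$ as the output of a Neumann series built on a symbolic parametrix, tracking $\epsilon$-dependence uniformly throughout. First I would establish a uniform elliptic lower bound: combining the ellipticity of $h^\epsilon \in S^m_{1,\text{\sf ell}}(\Xi)$, the bound $h^\epsilon \geq -C$, and the continuity of $\epsilon \mapsto h^\epsilon$ in $S^m_1(\Xi)$ (which gives uniform seminorm control over the compact interval $I$), I would choose $a>0$ large enough that $|h^\epsilon(X)-\mathfrak{z}|$ is bounded below by $c(\langle\xi\rangle^m + \text{dist}(\mathfrak{z},[-C,\infty)))$ uniformly in $\epsilon$ and $X$, for every $\mathfrak{z}\notin[a,\infty)$. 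The pointwise inverse $q^\epsilon_{\mathfrak{z}}(X):=(h^\epsilon(X)-\mathfrak{z})^{-1}$ then lies in $S^{-m}_1(\Xi)$ uniformly in $\epsilon$, and $\epsilon\mapsto q^\epsilon_{\mathfrak{z}}$ is continuous into $S^{-m}_1(\Xi)$ for its Fr\'echet topology.

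The core of the argument is to use $q^\epsilon_{\mathfrak{z}}$ as a parametrix and prove the remainder is small. I would compute $(h^\epsilon-\mathfrak{z})\sharp^\epsilon q^\epsilon_{\mathfrak{z}}=1+s^\epsilon_{\mathfrak{z}}$ by expanding the oscillatory integral defining $\sharp^\epsilon$ and extracting the principal term. The remainder is then of the form (\ref{gF-est}) with $\gamma=\Omega^\epsilon$ (whose required properties are given by Lemma \ref{B-est}) and an $F^\epsilon_\lambda$ built from the two symbols, with scaling parameter $\lambda:=1+\text{dist}(\mathfrak{z},[a,\infty))$. A direct estimate of derivatives of $h^\epsilon$ and $q^\epsilon_{\mathfrak{z}}$ in their respective symbol classes yields (\ref{l-est}) with a positive decay exponent $\rho'$, so Lemma \ref{M-est} produces $s^\epsilon_{\mathfrak{z}} \in C(I;S^{-\rho}_1(\Xi))$ with $\sup_{\epsilon\in I}\|s^\epsilon_{\mathfrak{z}}\|_{(\Xi,1,-\rho,N,M)} \leq C_{N,M}\lambda^{-\rho'}$ for some $\rho>0$.

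With a small remainder in hand, I would enlarge $a$ again so that $\|s^\epsilon_{\mathfrak{z}}\|_\epsilon$ in the unitization of $\mathfrak{A}^\epsilon(\Xi)$ stays below $1/2$ uniformly in $\epsilon$ (using $S^{-\rho}_1(\Xi)\hookrightarrow S^0_0(\Xi)\hookrightarrow\mathfrak{A}^\epsilon(\Xi)$ to convert Fr\'echet control into $C^*$-norm control). The Neumann series then yields $(1+s^\epsilon_{\mathfrak{z}})^{-1}$ in the unitization of $\mathfrak{C}^\epsilon$, and $r^\epsilon_{\mathfrak{z}}:=q^\epsilon_{\mathfrak{z}}\sharp^\epsilon(1+s^\epsilon_{\mathfrak{z}})^{-1}$ is the sought inverse of $h^\epsilon-\mathfrak{z}$ in $\mathfrak{M}^\epsilon(\Xi)$. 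The $L^1$-type bounds in Lemma \ref{M-est} show that $\mathfrak{F}^{-}(q^\epsilon_{\mathfrak{z}}\sharp^\epsilon s^\epsilon_{\mathfrak{z}})$ lies in $\mathfrak{L}$, and since the element of $\mathfrak{C}^\epsilon$ produced by the Neumann series is a sum of $\sharp^\epsilon$-products that preserve $\mathfrak{F}[\mathfrak{L}]$, this proves part~(1).

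For part~(2), I would rerun the same construction in the "glued" algebra: the $\epsilon$-uniform formulation of Lemma \ref{M-est}, together with the continuity statements of Hypothesis \ref{hyp-h} and of Lemma \ref{B-est}(c), places $\widetilde{q}_{\mathfrak{z}}$ and $\widetilde{s}_{\mathfrak{z}}$ inside $\mathfrak{F}[\widetilde{\mathcal{L}}]$; the Neumann series converges there in the $C^*$-norm of the unitization of $\mathfrak{C}=\mathfrak{F}[\mathfrak{B}]$, and the identity $\mathfrak{e}_\epsilon(\widetilde{r}_{\mathfrak{z}})=r^\epsilon_{\mathfrak{z}}$ is immediate from the definition of the evaluation. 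The main obstacle is the second step: producing symbolic estimates on the remainder with an explicit $\lambda^{-\rho'}$ decay, uniformly in $\epsilon$. The $x$-dependence of $h^\epsilon$ (absent in the Schr\"odinger-type setting of previous works) forces one to carry both $x$- and $\xi$-derivatives through the magnetic oscillatory kernel, and this is precisely the extra bookkeeping relative to \cite{MPR2} that the two preceding lemmas have been designed to absorb.
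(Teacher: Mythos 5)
Your strategy — pointwise inverse as parametrix, control of the oscillatory remainder via Lemmas~\ref{M-est} and~\ref{B-est}, Neumann series, then the evaluation maps for part~(2) — is the same route the paper takes. There is, however, a concrete gap at the Neumann step. You set $\lambda:=1+\mathrm{dist}(\mathfrak{z},[a,\infty))$ and propose to "enlarge $a$ again so that $\|s^\ep_{\mathfrak{z}}\|$ stays below $1/2$ uniformly." This cannot work for all $\mathfrak{z}\in\mathbb{C}\setminus[a,\infty)$: for $\mathfrak{z}$ with large positive real part and tiny nonzero imaginary part, or for $\mathfrak{z}$ real just to the left of $a$, the distance to $[a,\infty)$ — and hence your $\lambda$ — stays bounded (close to $1$) no matter how large $a$ is taken, so the factor $\lambda^{-\rho'}$ furnished by Lemma~\ref{M-est} does not become small and the Neumann series need not converge. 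The estimate you derive really only shows invertibility for one (real, large) shift $\lambda$, i.e.\ for $\mathfrak{z}=-\lambda$.

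This is exactly why the paper restricts the direct computation to $f^\ep=h^\ep+\lambda$, $\lambda>0$ large, proves $\sup_\ep\|\boldsymbol{g}(\ep;\lambda;\cdot)\|_{\Xi,1,-\rho,N,M}\le C_\nu\lambda^{-\nu}$, and then invokes Lemma~A.4 and the argument at the end of Theorem~1.8 of [MPR2]. Those references supply the missing piece: once one resolvent $r^\ep_{-\lambda_0}$ lies in $\mathfrak{F}[\mathfrak{L}]$ for a single large $\lambda_0$, the observable $h^\ep$ is affiliated to the algebra and the resolvent identity (within the Banach $^*$-algebra $\mathfrak{L}$ and its $C^*$-envelope) propagates membership of $r^\ep_{\mathfrak{z}}$ to all $\mathfrak{z}\in\mathbb{C}\setminus[a,+\infty)$. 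Your outline needs that extension step — either by citing the affiliation machinery as the paper does, or by an explicit resolvent-identity argument in $\mathfrak{L}$ starting from the one point $-\lambda_0$. The rest of your plan (including the $\ep$-uniform version for part~(2) via $\widetilde{\mathcal{L}}$ and the evaluation $\mathfrak{e}_\ep$) matches the paper.
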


\begin{proof}
For $\lambda>0$ set $f^\ep:=h^\ep+\lambda$, consider the
point-wise inverse $(f^\ep)^{-1}=(h^\ep+\lambda)^{-1}\in
S^{-m}_1(\Xi)$ and compute (in the sense of distributions and
using oscillatory integral techniques relying on
$\exp\l[-2i\sigma(Y,Z)\r]$)
$$
\left[f^\ep\sharp^\ep \left(f^\ep\right)^{-1}\right](X)=
$$
$$
=\pi^{-2n}\int_\Xi\int_\Xi\,dY\,dZ\,e^{-2i\sigma(Y,Z)}\Omega^\ep(x;y,z)
\l\{1+\l[f^\ep(X-Y)-f^\ep(X-Z)\r]\left(f^\ep\right)^{-1}(X-Z)\r\}=
$$
$$
=1+\pi^{-2n}\int_\Xi\int_\Xi\,dY\,dZ\,e^{-2i\sigma(Y,Z)}\Omega^\ep(x;y,z)
\big[f^\ep(X-Y)-f^\ep(X-Z)\big]\left(f^\ep\right)^{-1}(X-Z)=
$$
$$
=1+\pi^{-2n}\int_\Xi\int_\Xi\,dY\,dZe^{-2iz\cdot
\eta}e^{2iy\cdot\zeta}\Omega^\ep(x;y,z)\times
$$
$$
\times\int_0^1ds\big[(z-y)\cdot(\partial_xf^\ep)(X-Z+s(Z-Y))+(\zeta-\eta)\cdot(\partial_\xi
f^\ep)(X-Z+s(Z-Y))\big]\left(f^\ep\right)^{-1}(X-Z).
$$
The fact that $\partial_\xi f^\ep\in S^{m-1}_1(\Xi)$ is important
in the following arguments. Although $\partial_x f^\ep$ only
belongs to $S^{m}_1(\Xi)$ one can write
$$
(z_j-y_j)\l[e^{-2iz\cdot
\eta}e^{2iy\cdot\zeta}\r]=-\frac{1}{2i}(\partial_{\eta_j}+\partial_{\zeta_j})\l[e^{-2iz\cdot
\eta}e^{2iy\cdot\zeta}\r]
$$
and after integrating by parts we get the same type of
improvement. Thus
\begin{equation}\label{sharp-ep}
\left[f^\ep\sharp^\ep
(f^\ep)^{-1}\right](X)=1+\sum_{k=1}^ng_j(\ep;\lambda;X),
\end{equation}
where
$$
g_j(\ep;\lambda;X):=\pi^{-2n}\int_\Xi\int_\Xi\,dY\,dZ\,e^{-2iz\cdot
\eta}\,e^{2iy\cdot\zeta}\,\Omega^\ep(x;y,z)\times
$$
$$
\times\l\{\frac{i}{2}\int_0^1ds\,(\partial_{\xi_j}\partial_{x_j}f^\ep)(X-Z+s(Z-Y))(f^\ep)^{-1}(X-Z)-\r.
$$
$$
-i\int_0^1ds\,(\partial_{x_j}f^\ep)(X-Z+s(Z-Y))\big(\partial_{\xi_j}
f^\ep\big)(X-Z)(f^\ep)^{-2}(X-Z)+
$$
$$
+\l.\int_0^1ds\,(\zeta_j-\eta_j)(\partial_{\xi_j}
f^\ep)(X-Z+s(Z-Y))(f^\ep)^{-1}(X-Z)\r\}=
$$
\begin{equation}\label{g-est}
=:\pi^{-2n}\int_\Xi\int_\Xi\,dY\,dZ\,e^{-2iz\cdot
\eta}\,e^{2iy\cdot\zeta}\,\Omega^\ep(x;y,z)\,F_\lambda^\ep(X;Y,Z).
\end{equation}

We intend to prove that each term $g_j(\ep;\lambda;\cdot)$ is a
symbol of strictly negative order with a uniform bound controlled
by $\lambda>0$. One has
$$
\left[\partial_x^a\partial_\xi^\alpha
g_j\right](\ep;\lambda;X)=\sum_{b\leq
a}C_a^b\,\pi^{-2n}\int_\Xi\int_\Xi\,dY\,dZ\,e^{-2iz\cdot
\eta}e^{2iy\cdot\zeta}\left[\partial^{a-b}_x\Omega^\ep\right](x;y,z)\left[\partial_x^b\partial_\xi^\alpha
F_\lambda^\ep\right](X;Y,Z).
$$
Due to our Hypothesis \ref{hyp-B}, all the derivatives
$\partial^{c}_x\Omega^\ep$ are dealt with by Lemma \ref{B-est}. We
now study the functions $\partial_x^b\partial_\xi^\alpha F^\ep$
with $F^\ep$ defined in (\ref{g-est}); any of these functions is
the sum of three contributions.
\begin{itemize}
 \item Let us begin with
$$
\partial_x^b\partial_\xi^\alpha\big[(\partial_{\xi_j}\partial_{x_j}f^\ep)(X-Z+s(Z-Y))(f^\ep)^{-1}(X-Z)\big]=
$$
\end{itemize}
$$
=\sum_{c\leq
b,\beta\leq\alpha}C_{\alpha,\beta}^{b,c}\left[\partial_x^{b-c}
\partial_\xi^{\alpha-\beta}\partial_{\xi_j}\partial_{x_j}
f^\ep\right](X-Z+s(Z-Y))\left[\partial_x^c\partial_\xi^\beta
(f^\ep)^{-1}\right](X-Z).
$$
We have to use the fact that, due to the ellipticity of $h$,
$|(f^\ep)^{-1}(X)|\leq
C(f)\left(\left<\xi\right>^m+\lambda\right)^{-1}$. Moreover, it is
straightforward to see by induction that for $|c|+|\beta|\geq1$
$$
\partial_x^c\partial_\xi^\beta (f^\ep)^{-1}=(f^\ep)^{-1}\mathfrak{s}^\ep_{c,\beta}(f),
\ \ {\rm with}\ \mathfrak{s}_{c,\beta}(f)\in
C\left(I;S^{-|\beta|}_1(\Xi)\right),
$$
verifying
$$
\left\|\mathfrak{s}_{c,\beta}(f)\right\|_{\Xi,-|\beta|,N,M}\leq
C\left\|f^\ep\right\|_{\Xi,m,N+|c|,M+|\beta|}^{|c|+|\beta|}.
$$

For $|b|+|\alpha|=0$ we have
$$
\left|\left[\partial_{\xi_j}\partial_{x_j}f^\ep\right](X-Z+s(Z-Y))\left[(f^\ep)^{-1}\right](X-Z)\right|\leq
$$
$$
\le
C(f)\left\|f^\ep\right\|_{\Xi,m,1,1}\frac{\left<\xi\right>^{m-1}\left<\zeta\right>^{m-1}\left<\eta\right>^{m-1}}
{\left<\xi-\zeta\right>^m+\lambda}\leq
C(f)\left\|f^\ep\right\|_{\Xi,m,1,1}\frac{\left<\xi\right>^{m-1}\left<\zeta\right>^{m-1}
\left<\eta\right>^{m-1}}{\left<\xi\right>^m/\left<\zeta\right>^m+\lambda}\leq
$$
$$
\leq
C(f)\left\|f^\ep\right\|_{\Xi,m,1,1}\frac{\left<\zeta\right>^{2m-1}\left<\eta\right>^{m-1}
\left<\xi\right>^{m-1}}{\left<\xi\right>^m+\lambda\left<\zeta\right>^m}\leq
C(f)\left\|f^\ep\right\|_{\Xi,m,1,1}\frac{\left<\zeta\right>^{2m-1}\left<\eta\right>^{m-1}
\left<\xi\right>^{m-1}}{\left<\xi\right>^m+\lambda}.
$$
We use now the inequality $a+b\geq(\mu a)^{\mu^{-1}}(\nu
b)^{\nu^{-1}}$, valid for $a,b\in\mathbb{R}_+,\
\,\mu,\nu\in[1,+\infty),\ \mu^{-1}+\nu^{-1}=1$. Thus for some
$\nu\geq 1$ such that $m\nu^{-1}-1<0$, we have
$$
\left|\left[\partial_{\xi_j}\partial_{x_j}f^\ep\right](X-Z+s(Z-Y))\left[(f^\ep)^{-1}\right](X-Z)\right|\leq
$$
$$
\le C(f)\left\|f^\ep\right\|_{\Xi,m,1,1}\left<\zeta\right>^{2m-1}\left<\eta\right>^{m-1}\frac{\left<\xi\right>^{m-1}}
{\left<\xi\right>^m+\lambda}\leq
$$
$$
\leq
C(f)\,C(\nu)\left\|f^\ep\right\|_{\Xi,m,1,1}\left<\zeta\right>^{2m-1}\left<\eta\right>^{m-1}
\left<\xi\right>^{m\nu^{-1}-1}\lambda^{-\nu^{-1}}.
$$
For any fixed $X\in\Xi$ we have, by very similar arguments, the
following estimations:
$$
\left|\partial_y^b\partial_\eta^\alpha\left[\left(\partial_{\xi_j}\partial_{x_j}f^\ep\right)(X-Z+s(Z-Y))
\left[(f^\ep)^{-1}\right](X-Z)\right]\right|\leq
$$
$$
\leq
C(f)\,C(\nu)\left\|f^\ep\right\|_{\Xi,m,1,1}\left<\zeta\right>^{2m-1-|\alpha|}\left<\eta\right>^{m-1-|\alpha|}
\left<\xi\right>^{m\nu^{-1}-1-|\alpha|}\lambda^{-\nu^{-1}}.
$$
Let us remark that
$$
\partial_z^b\partial_\zeta^\alpha\left[\left(\partial_{\xi_j}\partial_{x_j}f^\ep\right)(X-Z+s(Z-Y))
\left[(f^\ep)^{-1}\right](X-Z)\right]=
$$
$$
=(-1)^{|\alpha|}\sum_{c\leq
b,\beta\leq\alpha}C_{\alpha,\beta}^{b,c}\,(1-s)^{|\alpha|-|\beta|}\left[\partial_x^{b-c}\partial_\xi^{\alpha-\beta}
\partial_{\xi_j}\partial_{x_j}f^\ep\right](X-Z+s(Z-Y))\left[\partial_x^c\partial_\xi^\beta
(f^\ep)^{-1}\right](X-Z),
$$
and thus is completely similar to the higher order derivatives
$\partial_x^b\partial_\xi^\alpha$ that we shall now study.

For $|b|+|\alpha|>0$ we obtain
$$
\left|\partial_x^b\partial_\xi^\alpha\big[(\partial_{\xi_j}\partial_{x_j}f^\ep)(X-Z+s(Z-Y))(f^\ep)^{-1}(X-Z)\big]\right|\leq
$$
$$
\leq \left|f_\ep^{-1}(X-Z)\right|\sum_{c\leq
b}C_b^c\sum_{\beta\leq\alpha}C_\alpha^\beta\left\|f^\ep\right\|_{\Xi,m,|b-c|+1,|\alpha-\beta|+1}
\left[\left<\xi\right>\left<\eta\right>\left<\zeta\right>\right]^{m-|\alpha-\beta|-1}
\left|\mathfrak{s}_{c,\beta}(f)(X-Z)\right|\leq
$$
$$
\leq
C(b,\alpha)\left\|f^\ep\right\|_{\Xi,m,|b|+1,|\alpha|+1}^{|b|+1+|\alpha|+1}\frac{\left[\left<\xi\right>\left<\eta\right>
\left<\zeta\right>\right]^{m-|\alpha|-1}}{\left<\xi-\zeta\right>^m+\lambda}\leq
$$
$$
\leq
C(b,\alpha)C(\nu)\left\|f^\ep\right\|_{\Xi,m,|b|+1,|\alpha|+1}^{|b|+1+|\alpha|+1}\left<\eta\right>^{m-|\alpha|-1}
\left<\zeta\right>^{2m-|\alpha|-1}\left<\xi\right>^{m\nu^{-1}-1-|\alpha|}\lambda^{-\nu^{-1}}
$$
and similar estimations for this term after application of
differential operators of the form
$\partial^b_y\partial^\alpha_\eta$ or
$\partial^b_z\partial^\alpha_\zeta$. Thus, for each $\ep\in I$
this first term (that we denote by $F^\ep_{\lambda,1}$) satisfies:
\begin{equation}\label{F1}
 F^\ep_{\lambda,1}\in BC^\infty\big(\Xi;S^{m-1,2m-1}_{1,1}(\Xi\times\Xi)\big)
\end{equation}
and
\begin{equation}\label{F1-2}
\underset{\ep\in
I}{\sup}\underset{x\in\mathcal{X}}{\sup}\left\|\left[\partial^b_x\partial^\alpha_\xi
F^\ep_{\lambda,1}\right](X;\cdot,\cdot)\right\|_{(m-1,N_1,M_1),(2m-1,N_2,M_2)}\leq
\end{equation}
$$
\leq
C(b,\alpha)C(\nu)C(f)\|f^\ep\|_{\Xi,m,|b|+N_1+N_2+1,|\alpha|+M_1+M_2+1}^{2+|b|+|\alpha|+N_2+M_2}
\left<\xi\right>^{m\nu^{-1}-1-|\alpha|}\lambda^{-\nu^{-1}}.
$$

Let us study its continuity with respect to $\ep\in I$. First, we
fix some $\ep\in I$ and for any $\ep^\prime\in I$ we consider the
difference
$$
\left(\partial_{\xi_j}\partial_{x_j}f^{\ep^\prime}\right)(X-Z+s(Z-Y))(f^{\ep^\prime})^{-1}(X-Z)-
(\partial_{\xi_j}\partial_{x_j}f^\ep)(X-Z+s(Z-Y))f_\ep^{-1}(X-Z)=
$$
$$
=\left[(\partial_{\xi_j}\partial_{x_j})(f^{\ep^\prime}-f_\ep)(X-Z+s(Z-Y))\right](f^{\ep^\prime})^{-1}(X-Z)+
$$
$$
+(\partial_{\xi_j}\partial_{x_j}f^\ep)(X-Z+s(Z-Y))\left[(f^{\ep^\prime})^{-1}-(f^\ep)^{-1}\right](X-Z)=
$$
$$
=\left[(\partial_{\xi_j}\partial_{x_j})(f^{\ep^\prime}-f^\ep)(X-Z+s(Z-Y))\right](f^{\ep^\prime})^{-1}(X-Z)+
$$
$$
+(\partial_{\xi_j}\partial_{x_j}f^\ep)(X-Z+s(Z-Y))\left[(f^{\ep^\prime})^{-1}(f^{\ep^\prime}-f^\ep)
\left[(f^\ep)^{-1}\right]\right](X-Z).
$$
After applying the operator $\partial_x^a\partial_\xi^\alpha$ to
the above difference, for the first term we can directly use the
previous analysis with $f^\ep$ replaced by $f^{\ep^\prime}-f^\ep$
and obtain a uniform bound with some constant multiplied by
$$
\left\|f^{\ep^\prime}-f^\ep\right\|_{\Xi,m,|b|+1,|\alpha|+1}^{|b|+1+|\alpha|+1},
$$
that converges to zero for $\ep^\prime\rightarrow\ep$ due to our
Hypothesis \ref{hyp-h}. For the second term, we have to replace in
the previous analysis the factor $
(\partial_{\xi_j}\partial_{x_j}f^\ep)(X-Z+s(Z-Y)) $ that is an
element of $S^{m-1}_1(\Xi)$ and thus satisfies an estimation of
the type
$$
(\partial_x^b\partial_\xi^\alpha\partial_{\xi_j}\partial_{x_j}f^\ep)(X-Z+s(Z-Y))\leq
\|f^\ep\|_{\Xi,m,|b|+1,|\alpha|+1}\left<\xi\right>^{m-1-|\alpha|}\left<\eta\right>^{m-1-|\alpha|}
\left<\zeta\right>^{m-1-|\alpha|},
$$
with the derivatives $\partial_x^b\partial_\xi^\alpha$ of the
factor
$$
(\partial_{\xi_j}\partial_{x_j}f^\ep)(X-Z+s(Z-Y))(f^{\ep^\prime})^{-1}(X-Z)(f^{\ep^\prime}-f^\ep)(X-Z)
$$
that are bounded by
$$
C(b,\alpha)\|f^{\ep^\prime}-f^\ep\|_{\Xi,m,|b|+1,|\alpha|+1}\,\|f^\ep\|_{\Xi,m,|b|+1,|\alpha|+1}^{3+|b|+|\alpha|}
\left<\xi\right>^{m-1-|\alpha|}\left<\eta\right>^{m-1-|\alpha|}\left<\zeta\right>^{3m-1-|\alpha|}.
$$
Similar estimations are obtained analogously, applying
differential operators of the form
$\partial^b_y\partial^\alpha_\eta$ or
$\partial^b_z\partial^\alpha_\zeta$. Thus
\begin{equation}\label{F1-cont}
\underset{x\in\mathcal{X}}{\sup}\left\|\left[\partial^b_x\partial^\alpha_\xi
\left(F^\ep_{\lambda,1}-F^\ep_{\lambda,1}\right)\right](X;\cdot,\cdot)\right\|_{(m-1,N_1,M_1),(3m-1,N_2,M_2)}\leq
\end{equation}
$$
\leq
C(b,\alpha)\|f^\ep\|_{\Xi,m,|b|+N_1+N_2+1,|\alpha|+M_1+M_2+1}^{3+|b|+|\alpha|+N_2+M_2}\|f^{\ep^\prime}-
f^\ep\|_{\Xi,m,|b|+N_1+N_2+1,|\alpha|+M_1+M_2+1}.
$$
We conclude that the first term in (\ref{g-est}) satisfies the
hypothesis for $F$ in Lemma \ref{M-est}.
\begin{itemize}
 \item Now let us consider the second term in (\ref{g-est}):
$$
\left(\partial_{x_j}f^\ep\right)(X-Z+s(Z-Y))\big(\partial_{\xi_j}
f^\ep\big)(X-Z)f_\ep^{-2}(X-Z)=
$$
\end{itemize}
$$
=\left(\partial_{x_j}f^\ep\right)(X-Z+s(Z-Y))\left[\big(\partial_{\xi_j}
f^\ep\big)(X-Z)f_\ep^{-1}(X-Z)\right]f_\ep^{-1}(X-Z).
$$
It verifies the estimation
$$
\left|\left(\partial_{x_j}f^\ep\right)(X-Z+s(Z-Y))\big(\partial_{\xi_j}
(f^\ep)^{-1}\big)(X-Z)\right|\leq
$$
$$
\leq C(f)
\left|(f^\ep)^{-1}(X-Z)\right|\,\|f^\ep\|_{\Xi,m,1,0}\,\|f^\ep\|_{\Xi,m,0,1}\frac{\left<\xi\right>^m
\left<\eta\right>^m\left<\zeta\right>^m}{\left<\xi\right>/\left<\zeta\right>}\leq
$$
$$
\leq
C(f)\,\|f^\ep\|_{\Xi,m,1,0}\,\|f^\ep\|_{\Xi,m,0,1}\left<\eta\right>^m\left<\zeta\right>^{2m+1}\frac{\left<\xi\right>^{m-1}}
{\left<\xi\right>^m+\lambda}\leq
$$
$$
\leq
C(\nu)\,C(f)\,\|f^\ep\|_{\Xi,m,1,0}\,\|f^\ep\|_{\Xi,m,0,1}\left<\eta\right>^m
\left<\zeta\right>^{2m+1}\left<\xi\right>^{m\nu^{-1}-1}\lambda^{-\nu^{-1}}.
$$
Applying then the derivation operator
$\partial_x^a\partial_\xi^\alpha$ we proceed exactly as for the
first term. In fact the essential step is the difference of one
unit between the denominator and the numerator in
$\left<\xi\right>^{m-1}\left(\left<\xi\right>^m+\lambda\right)^{-1}$
that was obtained for the first term due to the factor
$\partial_{x_j}\partial_{\xi_j}f^\ep$ and for this second term
from the factor $\left[\big(\partial_{\xi_j}
f^\ep\big)(X-Z)f_\ep^{-1}(X-Z)\right]$ (a strictly positive
difference would have been enough). The $\ep$-continuity also
follows by the same procedure.
\begin{itemize}
 \item Now let us consider the third term in (\ref{g-est}):
$$
\left(\zeta_j-\eta_j\right)\left(\partial_{\xi_j}
f^\ep\right)(X-Z+s(Z-Y))(f^\ep)^{-1}(X-Z).
$$
\end{itemize}
Recalling the above observation we notice that the same type of
factor $\left<\xi\right>^{m-1}\left(
\left<\xi\right>^m+\lambda\right)^{-1}$ will now be obtained due
to the presence of the factor $\partial_{\xi_j} f^\ep$. The
presence of the factor $(\zeta_j-\eta_j)$ will only contribute to
modify the order of symbols in the given variables so that we
shall obtain the estimation
$$
\left|(\zeta_j-\eta_j)(\partial_{\xi_j}
f^\ep)(X-Z+s(Z-Y))(f^\ep)^{-1}(X-Z)\right|\leq
$$
$$
\leq
C(\nu)\,C(f)\,\|f^\ep\|_{\Xi,m,0,1}\left<\eta\right>^m\left<\zeta\right>^{2m}
\left<\xi\right>^{m\nu^{-1}-1}\lambda^{-\nu^{-1}}.
$$
Obviously all the following arguments given for the first term
still remain true for this third term in (\ref{g-est}).

We conclude that
$$
\underset{\ep\in
I}{\sup}\,\underset{x\in\mathcal{X}}{\sup}\left\|\left[\partial^b_x\partial^\alpha_\xi
F^\ep_\lambda\right](X;\cdot,\cdot)\right\|_{(m,N_1,M_1),(2m+1,N_2,M_2)}\leq
$$
$$
\leq
C(\nu)\,C(f)\,\|f^\ep\|_{\Xi,m,|b|+N_1+N_2+1,|\alpha|+M_1+M_2+1}^{2+|b|+|\alpha|+N_2+M_2}
\left<\xi\right>^{m\nu^{-1}-1}\lambda^{-\nu^{-1}}.
$$

Thus, choosing $\nu>m$, we have $\rho:=-m\nu^{-1}+1>0$ and thus we
can use Lemma \ref{M-est} to deduce that for each $\lambda >0$ the
application $\boldsymbol{g}(\ep;\lambda;X):=\sum_{1\leq j\leq
n}g_j(\ep;\lambda;X)$ defines an element
$\boldsymbol{g}(\cdot;\lambda;\cdot)\in
C\big(I;S^{-\rho}_1(\Xi)\big)$ and moreover we have for any
$\nu>m$
$$
\underset{\ep\in
I}{\sup}\left\|\boldsymbol{g}(\ep;\lambda;\cdot)\right\|_{\Xi,1,-\rho,N,M}\leq
C_\nu\lambda^{-\nu}.
$$
Thus, we conclude that for any $\alpha\in\mathbb{N}^n$ there
exists a constant $C_{\alpha,\nu}$ such that
$$
\underset{\ep\in
I}{\sup}\underset{x\in\mathcal{X}}{\sup}\underset{\xi\in\mathcal{X}^\prime}{\sup}\left|<\xi>^{(\rho-|\alpha|)}\big(\partial_\xi^\alpha\boldsymbol{g}\big)(\ep;\lambda;x,\xi)
\right|\leq C_{\alpha,\nu}\lambda^{-\nu}.
$$

Fixing $\ep\in I$ and using Lemma A.4 in \cite{MPR2} and the
argument at the end of the proof of Theorem 1.8 (\cite{MPR2}
Section 2.1), one obtains the conclusion of point 1 of the
Proposition.

A completely straightforward modification of Lemma A.4 in
\cite{MPR2} (just take $(\ep,x)\in I\times\mathcal{X}$ instead of
$q\in\mathcal{X}$ in the proof given in \cite{MPR2}) allows us to
conclude that for any $\lambda>0$ and any $\ep\in I$ we have that
(for any $\nu>m$)
$$
\big(\mathfrak{F}^{-1}\boldsymbol{g}\big)(\cdot;\lambda;\cdot)\in
L^1\Big((\mathcal{X};C\big(I;BC_u(\mathcal{X})\big)\Big)=\widetilde{\mathcal{L}}\quad\text{and}
\quad\|\big(\mathfrak{F}^{-1}\boldsymbol{g}\big)(\cdot;\lambda;\cdot)
\|_{\widetilde{\mathcal{L}}}\leq C_\nu\lambda^{-\nu}.
$$

Using (\ref{sharp-ep}) and the above result and repeating the
arguments at the end of the proof of Theorem 1.8 (\cite{MPR2}
Section 2.1), we obtain also the second point of the proposition.
\end{proof}
\begin{corollary}\label{cor-aff}
The map
$$
I\ni\ep\mapsto r^\ep_{\mathfrak{z}}\in\mathfrak{F}\big[\mathfrak
L\big]
$$
is continuous for the topology induced by the norm
$\|G\|_{\mathfrak F(\mathfrak L)}:=
\int_{\mathcal{X}}dx\,\|[\mathfrak F(G)](x)\|_\infty$.
\end{corollary}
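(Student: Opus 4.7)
My plan is to get continuity almost for free by invoking the second part of Proposition \ref{prop-aff} together with a dominated convergence argument on the evaluation maps $\mathfrak{e}_\ep$.

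First, I would set $\widetilde{\rho}_{\mathfrak{z}} := \mathfrak{F}^{-1}(\widetilde{r}_{\mathfrak{z}}) \in \widetilde{\mathcal{L}} = L^1\big(\mathcal{X}; C(I; BC_u(\mathcal{X}))\big)$, using Proposition \ref{prop-aff}(2). The identity $\mathfrak{e}_\ep(\widetilde{r}_{\mathfrak{z}}) = r^\ep_{\mathfrak{z}}$ in $\mathfrak{F}[\mathfrak{L}]$ translates, after applying $\mathfrak{F}^{-1}$, to the pointwise equality $[\mathfrak{F}^{-1}(r^\ep_{\mathfrak{z}})](x) = [\widetilde{\rho}_{\mathfrak{z}}(x)](\ep) \in BC_u(\mathcal{X})$ for a.e.\ $x \in \mathcal{X}$. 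Hence, by definition of the norm in the corollary,
$$
\|r^\ep_{\mathfrak{z}} - r^{\ep_0}_{\mathfrak{z}}\|_{\mathfrak{F}(\mathfrak{L})} = \int_{\mathcal{X}} dx\,\big\|[\widetilde{\rho}_{\mathfrak{z}}(x)](\ep) - [\widetilde{\rho}_{\mathfrak{z}}(x)](\ep_0)\big\|_{\infty}.
$$

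Next, for each fixed $x$, the element $\widetilde{\rho}_{\mathfrak{z}}(x)$ lies in $C(I; BC_u(\mathcal{X}))$, so the integrand tends to $0$ as $\ep \to \ep_0$. The key uniform bound is
$$
\big\|[\widetilde{\rho}_{\mathfrak{z}}(x)](\ep) - [\widetilde{\rho}_{\mathfrak{z}}(x)](\ep_0)\big\|_{\infty} \leq 2\sup_{\ep \in I}\big\|[\widetilde{\rho}_{\mathfrak{z}}(x)](\ep)\big\|_{\infty} = 2\,\|\widetilde{\rho}_{\mathfrak{z}}(x)\|_{C(I; BC_u(\mathcal{X}))},
$$
which is integrable in $x$ precisely because $\widetilde{\rho}_{\mathfrak{z}} \in \widetilde{\mathcal{L}}$. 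Lebesgue's dominated convergence theorem then yields $\|r^\ep_{\mathfrak{z}} - r^{\ep_0}_{\mathfrak{z}}\|_{\mathfrak{F}(\mathfrak{L})} \to 0$ as $\ep \to \ep_0$, proving continuity at the arbitrary point $\ep_0 \in I$.

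There is essentially no obstacle: the heavy lifting has already been done in Proposition \ref{prop-aff}, where the glued inverse $\widetilde{r}_{\mathfrak{z}}$ was shown to lie in $\mathfrak{F}[\widetilde{\mathcal{L}}]$ (this is what encodes the joint $(\ep,x)$-continuity at the level of the $L^1$ symbol). The only point to double-check is that the definition of $\mathfrak{e}_\ep$ truly intertwines with $\mathfrak{F}^{-1}$ in the way used above, which is immediate because $\mathfrak{F}^{-1}$ acts only on the $\Xi$-variable while $\mathfrak{e}_\ep$ acts only on the $\ep$-variable. Once this is noted, the corollary reduces to the standard fact that evaluation at $\ep$ on $L^1(\mathcal{X}; C(I; BC_u(\mathcal{X})))$ is continuous in the $\mathcal{L}$-norm.
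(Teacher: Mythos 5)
Your argument is correct and proves the statement, but it takes a somewhat different route than the paper does. The paper observes that for $\widetilde\phi$ in the dense subspace $C_c\big(\mathcal{X};C(I;BC_u(\mathcal{X}))\big)$ one has the obvious inequality
$$\sup_{\ep\in I}\int_{\mathcal{X}}dx\,\big\|[\widetilde\phi(x)](\ep)\big\|_{BC_u(\mathcal{X})}\le\int_{\mathcal{X}}dx\,\sup_{\ep\in I}\big\|[\widetilde\phi(x)](\ep)\big\|_{BC_u(\mathcal{X})}=\|\widetilde\phi\|_{\widetilde{\mathcal{L}}},$$
and deduces $\widetilde{\mathcal{L}}\subset C(I;\mathcal{L})$ from it: the evaluation family $\ep\mapsto\mathfrak{e}_\ep(\widetilde\phi)$ is obviously continuous in $\mathcal{L}$ for compactly supported $\widetilde\phi$, the inequality says the sup norm over $\ep$ is controlled by the $\widetilde{\mathcal{L}}$-norm, and a uniform limit of continuous $\mathcal{L}$-valued functions is continuous; the corollary then follows at once from $\widetilde r_{\mathfrak z}\in\mathfrak{F}[\widetilde{\mathcal{L}}]$. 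You instead apply dominated convergence directly to the element $\widetilde\rho_{\mathfrak z}=\mathfrak{F}^{-1}\widetilde r_{\mathfrak z}$, using that for almost every $x$ the fiber $\widetilde\rho_{\mathfrak z}(x)$ lies in $C(I;BC_u(\mathcal{X}))$ and that $x\mapsto\|\widetilde\rho_{\mathfrak z}(x)\|_{C(I;BC_u)}$ is an integrable dominating function. Both arguments rest on exactly the same input, namely Proposition \ref{prop-aff}(2), and the fact that $\mathfrak{e}_\ep$ commutes with the partial Fourier transform; yours avoids invoking density of $C_c$ explicitly, while the paper's density argument avoids invoking DCT. The two proofs are of essentially the same depth and both are valid; a small stylistic improvement to yours would be to say ``for almost every $x$'' rather than ``for each fixed $x$'' when asserting $\widetilde\rho_{\mathfrak z}(x)\in C(I;BC_u(\mathcal{X}))$, since $\widetilde\rho_{\mathfrak z}$ is only an $L^1$-class.
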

\begin{proof}
We notice that for any $\widetilde{\phi}\in
C_c\Big(\mathcal{X};C\big(I;BC_u(\mathcal{X})\big)\Big)$ we have
$$
\underset{\ep\in I}{\sup}\int_{\mathcal{X}}dx\
\left\|\left[\widetilde{\phi}(x)\right](\ep)\right\|_{BC_u(\mathcal{X})}\leq
\int_{\mathcal{X}}dx\ \underset{\ep\in
I}{\sup}\left\|\left[\widetilde{\phi}(x)\right](\ep)\right\|_{BC_u(\mathcal{X})}
$$
and thus $\widetilde{\mathcal{L}}\subset
C\big(I;\mathcal{L}\big)$.
\end{proof}

\section{A continuous field of twisted crossed products}

In this section we prove Theorem \ref{T-M-1}, which in its turn implies our main result. Our proposal is
to use the concept of {\it continuous field of $C^*$-algebras}
\cite{Fe,T1,T2,Bl,Di,Ni,PR2,Ri}, and Theorem 2.4 of \cite{Ri}. Let us
notice that we are in the frame of Section 2 of \cite{Ri} with the
$C^*$-algebra $A=BC_u(\mathcal{X})$ (that is abelian and unital),
the locally compact space $\Omega = I$ (that is even compact in
our case) and the locally compact group
$G=\mathcal{X}\cong\mathbb{R}^n$ that is abelian and second
countable. The action of the group on the $C^*$-algebra is
explicitely given as the action $\theta$ of $\mathcal{X}$ on
$BC_u(\mathcal{X})$ by translations. Let us recall the notion of
{\it continuous field of $\theta$-cocycles on the group $\mathcal{X}$ over
the locally compact space $I$}, as introduced in \cite{Ri}:

\begin{definition}\label{c-f-cocycle}
A {\it continuous field of $\theta$-cocycles on $\mathcal{X}$ over $I$} is a function
$$
\widetilde{\omega}:I\times\mathcal{X}\times\mathcal{X}\rightarrow BC_u(\mathcal{X})
$$
such that:
\begin{enumerate}
\item for any $\ep\in I$ the map
$\widetilde{\omega}(\ep,\cdot,\cdot)$ defines a normalized
$\theta$-cocycle on $\mathcal{X}$, i.e.
$|[\widetilde{\omega}(\ep,y,z)](x)|=1$ and
$$
\widetilde{\omega}(\ep,x,y+z)\theta_x\left[\widetilde{\omega}(\ep,y,z)\right]=\widetilde{\omega}(\ep,x+y,z)
\widetilde{\omega}(\ep,x,y),
\quad\widetilde{\omega}(\ep,x,0)=\widetilde{\omega}(\ep,0,x)=1;
$$
\item for any $(y,z)\in\mathcal{X}\times\mathcal{X}$ the map
$I\ni\ep\mapsto\widetilde{\omega}(\ep,y,z)\in BC_u(\mathcal{X})$
is continuous; \item the map
$\mathcal{X}\times\mathcal{X}\ni(y,z)\mapsto\widetilde{\omega}(\cdot,y,z)\in
C\big(I;BC_u(\mathcal{X})\big)$ is Bochner measurable.
\end{enumerate}
\end{definition}

\begin{remark}\label{R-c-f-cocycle}
The map $\widetilde{\omega}$ defined in (\ref{tilde-omega}) and
(\ref{def-omega}) satisfies the conditions for a {\it continuous
field of $\theta$-cocycles on $\mathcal{X}$ over $I$}.
One checks easily that the first condition is satisfied, by an inspection of the explicit definition; the
last two conditions follow from Hypothesis \ref{hyp-B} that implies that $\tilde{\omega}$ belongs in fact to
$C\Big(\mathcal{X}\times\mathcal{X};C\big(I;BC_u(\mathcal{X})\big)\Big)$.
\end{remark}

In this framework we follow M. Rieffel \cite{Ri} and consider the
field of $C^*$-algebras $\left\{\mathfrak{B}^\ep\right\}_{\ep\in
I}$ over the compact space $I$ and $\mathfrak{B}$ as a
$C^*$-algebra of {\it cross-sections of this field}. Then
combining our framework with Theorem 2.4 and
Propositions 1.2 and 2.3 in \cite{Ri}, one obtains the following result:

\begin{proposition}
The family of maps
$\{\mathfrak{e}_\ep:\mathfrak{B}\rightarrow\mathfrak{B}^\ep\}_{\ep\in I}$ have the following properties:
\begin{enumerate}
\item each
$\mathfrak{e}_\ep:\mathfrak{B}\rightarrow\mathfrak{B}^\ep$ is
surjective; \item for any $F\in\mathfrak{B}$ we have
$\|F\|_{\mathfrak{B}}=\underset{\ep\in
I}{\sup}\|\mathfrak{e}_\ep(F)\|_{\mathfrak{B}^\ep}$; \item for any $F\in\mathfrak{B}$ the map
$I\ni\ep\mapsto\|\mathfrak{e}_\ep(F)\|_{\mathfrak{B}^\ep}\in\mathbb{R}_+$ is upper semi-continuous.
\end{enumerate}
\end{proposition}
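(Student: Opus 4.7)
The plan is to invoke Rieffel's machinery \cite{Ri} directly, since the setup in Section 4 has been tailored precisely to match his framework. First I would verify the hypotheses: the $C^*$-algebra $A=BC_u(\mathcal{X})$ is abelian and unital, the group $G=\mathcal{X}\cong\mathbb{R}^n$ is abelian and second countable, the base $I$ is compact, and $\theta$ is the translation action. Remark \ref{R-c-f-cocycle} has already verified that $\widetilde{\omega}$ defined by (\ref{tilde-omega}) is a continuous field of $\theta$-cocycles on $\mathcal{X}$ over $I$ in the sense of Definition \ref{c-f-cocycle}. With these inputs, Rieffel's construction produces exactly the twisted crossed-product $\mathfrak{B}=C\big(I;BC_u(\mathcal{X})\big)\rtimes^{\widetilde{\omega}}_{\widetilde{\theta}}\mathcal{X}$ and realizes it as a $C^*$-algebra of cross-sections of the field $\{\mathfrak{B}^\ep\}_{\ep\in I}$, with the $\mathfrak{e}_\ep$'s as the evaluation maps.

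For property (1), surjectivity, I would argue on the dense $^*$-subalgebra $C_c\big(\mathcal{X};C(I;BC_u(\mathcal{X}))\big)$, on which $\mathfrak{e}_\ep$ clearly surjects onto $C_c\big(\mathcal{X};BC_u(\mathcal{X})\big)$ (which is dense in $\mathfrak{B}^\ep$). Since $\mathfrak{e}_\ep$ was already shown to extend to a contractive $^*$-homomorphism $\mathfrak{B}\to\mathfrak{B}^\ep$, and the image of any $C^*$-algebra homomorphism is closed, surjectivity of the extension follows. This corresponds to Proposition 2.3 in \cite{Ri}.

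For property (2), the identity $\|F\|_{\mathfrak{B}}=\sup_{\ep\in I}\|\mathfrak{e}_\ep(F)\|_{\mathfrak{B}^\ep}$, I would use Rieffel's Theorem 2.4 together with Proposition 1.2: the enveloping $C^*$-norm on the Banach $^*$-algebra $\widetilde{\mathcal{L}}$ is computed as the supremum of the seminorms $\phi\mapsto\|\pi(\phi)\|$ over all covariant representations, but any such representation decomposes according to its value on the central subalgebra $C(I)\subset C(I;BC_u(\mathcal{X}))$ into an integral/supremum of covariant representations of the fibre algebras, giving exactly the pointwise supremum of the fibre norms.

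For property (3), upper semi-continuity, this is a standard general feature of continuous fields of $C^*$-algebras: if $F_n\in C_c\big(\mathcal{X};C(I;BC_u(\mathcal{X}))\big)$ converges to $F$ in $\mathfrak{B}$, then $\ep\mapsto\|\mathfrak{e}_\ep(F_n)\|$ is continuous for each $n$ (because the fibre $C^*$-norms vary continuously on such compactly-supported sections, as one checks directly using the explicit formula (\ref{d-diam_tilde})) and converges uniformly in $\ep$ to $\ep\mapsto\|\mathfrak{e}_\ep(F)\|$; a uniform limit of continuous functions is continuous, hence in particular upper semi-continuous. I expect the only delicate point to be the continuous $\ep$-dependence of the fibre norms on compactly supported sections, but this is handled by the continuity of $\widetilde{\omega}$ in $\ep$ (from Hypothesis \ref{hyp-B}) combined with the fact that the left regular representations $\Pi^\ep$ depend continuously on $\ep$ on such sections, which is what Rieffel's Propositions 1.2 and 2.3 are designed to deliver.
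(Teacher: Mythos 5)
Your overall strategy coincides with the paper's: the paper gives no proof of this proposition at all beyond the one-sentence remark that it follows from Theorem 2.4 and Propositions 1.2 and 2.3 of Rieffel together with the verification (Remark \ref{R-c-f-cocycle}) that $\widetilde{\omega}$ is a continuous field of $\theta$-cocycles. Your discussion of properties (1) and (2) is consistent with that.

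However, your elaboration of property (3) contains a genuine error that you should not let stand. You assert that for a compactly supported section $F_n\in C_c\big(\mathcal{X};C(I;BC_u(\mathcal{X}))\big)$ the function $\ep\mapsto\|\mathfrak{e}_\ep(F_n)\|_{\mathfrak{B}^\ep}$ is \emph{continuous}, and you claim this can be ``checked directly'' from the explicit formula (\ref{d-diam_tilde}). This is not so: the norm $\|\cdot\|_{\mathfrak{B}^\ep}$ is the enveloping $C^*$-norm of the twisted crossed product, i.e.\ a supremum over all covariant representations, and it is not computed by any pointwise or integral formula. Worse, if your claim were true, then since compactly supported sections are dense in $\mathfrak{B}$ and the evaluations are contractive, a uniform limit argument would give \emph{continuity} (not merely upper semi-continuity) of $\ep\mapsto\|\mathfrak{e}_\ep(F)\|$ for every $F\in\mathfrak{B}$. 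Applied to $F=\widetilde{r}_\mathfrak{z}$ from Proposition \ref{prop-aff}, this would already prove Theorem \ref{T-M-1} and make the whole of Corollary \ref{lower-s-cont} (the strong-continuity argument through the regular representation $\Pi^\ep$) redundant. The fact that the paper carries out that extra argument is a clear signal that continuity of the fibre norm on compactly supported sections is precisely what is \emph{not} automatic. What Rieffel's framework actually delivers directly is only upper semi-continuity of $\ep\mapsto\|\mathfrak{e}_\ep(F)\|$ (coming from the $C(I)$-algebra structure of $\mathfrak{B}$: the kernels $\ker\mathfrak{e}_\ep$ vary ``lower semi-continuously'' so the quotient norms vary upper semi-continuously). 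Lower semi-continuity is a separate issue, and that is exactly why the paper adds Lemma \ref{strong-cont} and Corollary \ref{lower-s-cont}. You should replace your argument for (3) by a direct citation of the upper semi-continuity statement in Rieffel, and not attempt to derive it from a false continuity claim on a dense subalgebra.
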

Our Proposition \ref{prop-aff} now evidently implies the following Corollary
\begin{corollary}\label{upper-s-cont}
Under our Hypothesis \ref{hyp-h} and \ref {hyp-B}, the map
$I\ni\ep\mapsto\|r^\ep_\mathfrak{z}\|_{\mathfrak{B}^\ep}$ is upper semi-continuous.
\end{corollary}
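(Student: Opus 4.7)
The plan is to observe that this corollary is essentially a one-line deduction from Proposition \ref{prop-aff} combined with point 3 of the immediately preceding Proposition (the Rieffel-type statement about the fibres $\{\mathfrak{e}_\ep\}_{\ep\in I}$). First I would recall that Proposition \ref{prop-aff} produces a global object $\widetilde{r}_{\mathfrak{z}}$ lying in $\mathfrak{F}[\widetilde{\mathcal L}]\subset \mathfrak C$, with the key identity $\mathfrak{e}_\ep(\widetilde{r}_{\mathfrak z})=r^\ep_{\mathfrak z}$ valid for every $\ep\in I$. Equivalently, since $\mathfrak C \cong \mathfrak B$ via $\mathfrak F$ and $\mathfrak C^\ep \cong \mathfrak B^\ep$ intertwine the evaluation maps, we may view $\widetilde{r}_{\mathfrak z}$ as an element $F\in\mathfrak B$ whose fibres coincide with the $r^\ep_{\mathfrak z}$.

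Next I would identify the relevant norm. The $C^*$-norm of $r^\ep_{\mathfrak z}$ in $\mathfrak C^\ep$ agrees with its norm in $\mathfrak B^\ep$ (the Fourier transform $\mathfrak F^-$ is a $C^*$-isomorphism between $\mathfrak C^\ep$ and $\mathfrak B^\ep$ by construction) and both equal the operator norm of $\mathfrak{Op}^{A^\ep}(r^\ep_{\mathfrak z})=R^\ep(\mathfrak z)$ via the isomorphism $\mathfrak{Op}^{A^\ep}:\mathfrak A^\ep\to\mathbb B(L^2(\X))$ restricted to $\mathfrak C^\ep\hookrightarrow \mathfrak A^\ep$. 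Hence the function to be studied is exactly $\ep\mapsto \|\mathfrak{e}_\ep(F)\|_{\mathfrak B^\ep}$.

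Now I would simply quote point 3 of the preceding Proposition, which guarantees that for any $F\in\mathfrak B$ the map $\ep\mapsto \|\mathfrak{e}_\ep(F)\|_{\mathfrak B^\ep}$ is upper semi-continuous. Applying this to $F=\mathfrak F^{-1}\widetilde{r}_{\mathfrak z}$ yields the conclusion. There is no real obstacle here, because the conceptual work has already been absorbed into Proposition \ref{prop-aff} (producing a single global resolvent symbol whose evaluations recover the $\ep$-fibre symbols) and into the Rieffel framework (which provides the upper semi-continuity of fibre norms for any section of a twisted crossed-product $C^*$-bundle). The only point that deserves a brief justification is that $\widetilde{r}_{\mathfrak z}$ really does live in $\mathfrak B$ (not merely in a formal direct product), and this is exactly what the second clause of Proposition \ref{prop-aff} asserts, together with the fact that $\widetilde{\mathcal L}$ is dense in $\mathfrak B$ for the enveloping $C^*$-norm.
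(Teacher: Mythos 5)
Your proof is correct and matches the paper's intent exactly: the paper itself offers no written proof beyond the remark that Proposition 3.4 "now evidently implies" the corollary, and the intended deduction is precisely the one you give — view $\widetilde r_{\mathfrak z}$ (or its inverse partial Fourier transform) as a global section in $\mathfrak B$, note $\mathfrak{e}_\ep(\widetilde r_{\mathfrak z})=r^\ep_{\mathfrak z}$, and invoke point~3 of the preceding Rieffel-type Proposition. Your side remarks on the $\mathfrak C^\ep\cong\mathfrak B^\ep$ identification and on $\widetilde{\mathcal L}$ being dense in $\mathfrak B$ are exactly the minor notational reconciliations the paper leaves implicit.
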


Using now our Corollary \ref{cor-aff} and the Hilbert space
representation (\ref{reg-repr}) we shall obtain the lower
semi-continuity of our map $I\ni\ep\mapsto
\parallel r^\ep_{\mathfrak{z}}\parallel_\ep$ by a standard procedure.

\begin{lemma}\label{strong-cont}
Given a continuous function $I\ni\ep\mapsto\phi^\ep\in\mathfrak{L}$ and an element $\psi\in\mathcal{H}$, the map
$$
I\ni\ep\mapsto\phi^\ep\diamond^\ep\psi\in\mathcal{H}
$$ is continuous.
\end{lemma}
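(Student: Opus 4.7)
The plan is to split the difference as
\begin{equation*}
\phi^\ep\diamond^\ep\psi - \phi^{\ep_0}\diamond^{\ep_0}\psi
= (\phi^\ep-\phi^{\ep_0})\diamond^\ep\psi + \left(\phi^{\ep_0}\diamond^\ep\psi - \phi^{\ep_0}\diamond^{\ep_0}\psi\right)
\end{equation*}
and estimate the two contributions separately.

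The first contribution is controlled by a uniform bound on the action. For each fixed $z$, the map $(x,y)\mapsto\phi(x+(z-y)/2,z)\,\psi(x+z/2,y-z)\,\omega^\ep(x-y/2;z,y-z)$ in (\ref{d-diam}) has $L^2(\mathcal{X}\times\mathcal{X})$-norm bounded by $\|\phi(\cdot,z)\|_{BC_u(\mathcal{X})}\|\psi\|_{\mathcal{H}}$, since $|\omega^\ep|=1$ and translations act unitarily on $\mathcal{H}$. Minkowski's inequality then yields the $\ep$-independent estimate $\|\eta\diamond^\ep\chi\|_{\mathcal{H}}\leq\|\eta\|_{\mathfrak{L}}\|\chi\|_{\mathcal{H}}$, so the first contribution is majorized by $\|\phi^\ep-\phi^{\ep_0}\|_{\mathfrak{L}}\|\psi\|_{\mathcal{H}}$, which vanishes as $\ep\to\ep_0$ by hypothesis.

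For the second contribution I would use a density reduction. Approximate $\phi^{\ep_0}$ in $\mathfrak{L}$ by some $\phi_0\in C_c(\mathcal{X};BC_u(\mathcal{X}))$ and $\psi$ in $\mathcal{H}$ by some $\psi_0\in C_c(\mathcal{X}\times\mathcal{X})$. The uniform bound just established, applied to $\phi^{\ep_0}-\phi_0$ and to $\psi-\psi_0$, traps all but an arbitrarily small error and reduces the task to showing that $\phi_0\diamond^\ep\psi_0\to\phi_0\diamond^{\ep_0}\psi_0$ in $\mathcal{H}$. For these compactly supported data the integrand of (\ref{d-diam}) is supported, uniformly in $\ep$, in a fixed compact subset of $\mathcal{X}^3$. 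Lemma \ref{B-est}(c) ensures continuity of $\ep\mapsto\omega^\ep$ in a topology strong enough to imply uniform convergence on compact sets, and combined with $|\omega^\ep|=1$ and dominated convergence this forces the remaining difference to vanish uniformly on its common support, hence in $L^2$.

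The delicate point is that for general $\phi^{\ep_0}\in\mathfrak{L}$ and $\psi\in\mathcal{H}$ the integrand is spread over all of $\mathcal{X}^3$, so pointwise continuity of $\omega^\ep$ in $\ep$ does not immediately upgrade to $L^2$-continuity of $\phi^{\ep_0}\diamond^\ep\psi$. The density reduction circumvents precisely this difficulty: the uniform $\mathfrak{L}$-to-$\mathcal{H}$ bound absorbs the tails, while on the compactly supported core only uniform convergence of $\omega^\ep$ is needed, which is at hand.
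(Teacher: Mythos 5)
Your proof is correct, and it follows the paper's overall strategy: the same splitting of the difference into a ``vary the symbol with the product fixed'' piece and a ``vary the product with the symbol fixed'' piece, and the same uniform $\mathfrak L$-to-$\mathcal H$ bound to dispatch the first piece (your Minkowski phrasing is just a cleaner packaging of the paper's Cauchy--Schwarz/Schur computation, and it is indeed independent of $\ep$ since it only uses $|\omega^\ep|=1$).

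Where you diverge is in the treatment of the second piece. The paper keeps general $\phi,\psi$, bounds the integrand by the same dominating kernel, and invokes the Dominated Convergence Theorem directly, using only pointwise convergence of $\omega^\ep-\omega^{\ep'}$ supplied by Hypothesis~\ref{hyp-B}. You instead pass through a density reduction to $\phi_0\in C_c(\mathcal X;BC_u(\mathcal X))$ and $\psi_0\in C_c(\mathcal X\times\mathcal X)$, on which the integrand of~(\ref{d-diam}) lives on a fixed compact subset of $\mathcal X^3$, and then use Lemma~\ref{B-est}(c) to get uniform convergence of $\omega^\ep$ there. This is a genuine (if modest) difference: the paper's DCT argument requires an iterated dominated-convergence step (first in the inner $y$-integral, then in the outer $(q,x)$-integral) and quietly depends on the dominating function not drifting with $\ep'$ because $\phi^{\ep'}$ also varies there; your version removes both of those wrinkles at the cost of invoking Lemma~\ref{B-est}(c) rather than just pointwise continuity. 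A further small improvement: you fix $\phi^{\ep_0}$ in the second term of the split, whereas the paper keeps $\phi^{\ep'}$, which makes its second term carry an $\ep'$-dependence in both the symbol and the twist. Both arguments reach the same conclusion; yours is arguably tighter, and the paper's is marginally shorter.
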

\begin{proof}
 For $\,\epsilon,\,\epsilon^{'}\in I$ we have
\begin{equation}\label{dif-r}
\|\phi^\epsilon\diamond^\epsilon \psi-\phi^{\epsilon^\prime}\diamond^{\epsilon^\prime}
\psi\|_{L^2(\X\times\X)}
\leq\|\phi^\epsilon\diamond^\epsilon \psi-\phi^{\epsilon^\prime}\diamond^{\epsilon}
\psi\|_{L^2(\mathcal{X}\times\mathcal{X})}+\|\phi^{\epsilon^\prime}\diamond^{\epsilon}
\psi-\phi^{\epsilon^\prime}\diamond^{\epsilon^\prime}
\psi\|_{L^2(\mathcal{X}\times\mathcal{X})}.
\end{equation}
To estimate the first term we use the definition of the $\diamond^\ep$ product in order to write
$$
\left[\phi^\epsilon\diamond^\epsilon
\psi-\phi^{\epsilon^\prime}\diamond^{\epsilon}
\psi\right](q,x)=\int_{\X}dy\left(\phi^\epsilon-\phi^{\epsilon^\prime}\right)
\left(q-\frac{x-y}{2};y\right)\psi(q+\frac{y}{2};x-y)\,\omega^{\epsilon}\left(q-\frac{x}{2};y,x-y\right),
$$
so
$$
\|\phi^\epsilon\diamond^\epsilon
\psi-\phi^{\epsilon^\prime}\diamond^{\epsilon}
\psi\|^2_{L^2(\mathcal{X}\times\mathcal{X})}\leq
\int_{\mathcal{X}}dq\int_{\mathcal{X}}dx\left[\int_{\mathcal{X}}dy\left|\left(\phi^\epsilon-
\phi^{\epsilon^\prime}\right)
\left(q-\frac{x-y}{2};y\right)\psi(q+\frac{y}{2};x-y)\right|\right]^2\leq
$$
$$
\leq
\int_{\mathcal{X}}dq\int_{\mathcal{X}}dx\left[\int_{\mathcal{X}}dy\left(\underset{z\in\mathcal{X}}
{\sup}\left|\left(\phi^\epsilon-\phi^{\epsilon^\prime}\right)
(z;y)\right|\right)\left|\psi(q+\frac{y}{2};x-y)\right|\right]^2\leq
$$
$$
\leq \|\phi^\epsilon-\phi^{\epsilon^\prime}\|_{1,\infty}
\int_{\mathcal{X}}dq\int_{\mathcal{X}}dx\int_{\mathcal{X}}dy\left(\underset{z\in\mathcal{X}}{\sup}
\left|\left(\phi^\epsilon-\phi^{\epsilon^\prime}\right)
(z;y)\right|\right)\left|\psi(q+\frac{y}{2};x-y)\right|^2=
$$
$$
=\|\phi^\epsilon-\phi^{\epsilon^\prime}\|_{\mathfrak
L}^2\,\|\psi\|^2_{L^2(\X\times\X)}
\underset{\ep'\rightarrow\ep}{\longrightarrow}0
$$
using Fubini and a change of variables. It remains to verify that
second term in (\ref{dif-r}) also converges to $0$; one has:
\begin{equation}
\|\phi^{\epsilon^\prime}\diamond^\epsilon
\psi-\phi^{\epsilon^\prime}\diamond^{\epsilon'}\psi\|_
{L^2(\mathcal{X}\times\mathcal{X})}^{2}\leq
\end{equation}
$$
\leq\int_{\X}dq\int_{\X}dx\left[\int_{\X}dy\
\left\vert\phi^{\ep^\prime}
\left(q-\frac{x-y}{2};y\right)\psi(q+\frac{y}{2};x-y)\right\vert\right.\times\hfill
$$
$$
\hfill\times\left.\left\vert\,\omega^{\epsilon}\left(q-\frac{x}{2};y,x-y\right)-
\omega^{\epsilon^{'}}\left(q-\frac{x}{2};y,x-y\right)\right\vert\right]^2
\leq
$$
$$
\leq
4\int_{\X}dq\int_{X}dx\left[\int_{\X}dy\,\|\phi^{\epsilon^\prime}(\cdot;y)\|_{\infty}\,|\psi(q+\frac{y}{2};x-y)|\right]^2\leq
$$
$$
\leq 4\,\|\phi^{\epsilon^\prime}
\|_{1,\infty}\int_{\X}dq\int_{\X}dx\int_{\X}dy\,\|\phi^{\epsilon^\prime}
(\cdot;y)\|_{\infty}\,|\psi(q+\frac{y}{2};x-y)|^2
=4\,\|\phi^{\epsilon^\prime}\|_{\mathfrak
L}^2\,\|\psi\|_{L^2(\X\times\X)}^2,
$$
by the same procedure as above. Moreover, due to our Hypothesis \ref{hyp-B} the difference
$$
\omega^{\epsilon}\left(q-\frac{x}{2};y,x-y\right)-
\omega^{\epsilon^{'}}\left(q-\frac{x}{2};y,x-y\right)
$$
converges point-wise to $0$ for $|\ep-\ep^\prime|\rightarrow0$. We conclude by the Dominated Convergence Theorem.
\end{proof}

\begin{corollary}\label{lower-s-cont}
Under our Hypothesis \ref{hyp-h} and \ref {hyp-B}, the map
$I\ni\ep\mapsto\|r^\ep_\mathfrak{z}\|_{\mathfrak{B}^\ep}$ is lower semi-continuous.
\end{corollary}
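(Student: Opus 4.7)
The plan is to express $\|r^\ep_{\mathfrak{z}}\|_{\mathfrak{B}^\ep}$ as a pointwise supremum of continuous functions of $\ep$, so that lower semi-continuity is automatic. The vehicle is the left regular representation $\Pi^\ep$ introduced in (\ref{reg-repr}), coupled with the strong-continuity statement of Lemma \ref{strong-cont}.

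To begin, Proposition \ref{prop-aff}(2) guarantees that $r^\ep_{\mathfrak{z}}\in\mathfrak{F}[\mathfrak{L}]$, so $\phi^\ep:=\mathfrak{F}^-(r^\ep_{\mathfrak{z}})$ lies in $\mathfrak{L}$, and by Corollary \ref{cor-aff} the map $\ep\mapsto\phi^\ep\in\mathfrak{L}$ is continuous in the $\mathfrak{L}$-norm. Fixing an arbitrary $\psi\in\mathcal{H}=L^2(\mathcal{X}\times\mathcal{X})$, Lemma \ref{strong-cont} then yields continuity of $\ep\mapsto \Pi^\ep(\phi^\ep)\psi=\phi^\ep\diamond^\ep\psi\in\mathcal{H}$, hence of the numerical function $\ep\mapsto\|\Pi^\ep(\phi^\ep)\psi\|_{\mathcal{H}}$. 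Taking the supremum over $\psi$ in the closed unit ball of $\mathcal{H}$ produces the operator norm $\|\Pi^\ep(\phi^\ep)\|_{\mathbb{B}(\mathcal{H})}$, which, being a pointwise supremum of continuous functions, is lower semi-continuous in $\ep$.

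The final — and technically most delicate — step is to identify this operator norm with the $C^*$-norm $\|r^\ep_{\mathfrak{z}}\|_{\mathfrak{B}^\ep}$. Since $\mathcal{X}=\mathbb{R}^n$ is abelian, and therefore amenable, the left regular representation $\Pi^\ep$ of the twisted crossed product $BC_u(\mathcal{X})\rtimes^{\omega^\ep}_{\theta}\mathcal{X}=\mathfrak{B}^\ep$ is faithful (the standard analog, for twisted crossed products, of the coincidence of the reduced and full crossed products for amenable groups; cf.\ \cite{PR1,PR2}), hence isometric. Combined with the isometric identification of $\mathfrak{C}^\ep$ with $\mathfrak{B}^\ep$ via $\mathfrak{F}^-$, this gives $\|\Pi^\ep(\phi^\ep)\|_{\mathbb{B}(\mathcal{H})}=\|\phi^\ep\|_{\mathfrak{B}^\ep}=\|r^\ep_{\mathfrak{z}}\|_{\mathfrak{B}^\ep}$, and the desired lower semi-continuity follows. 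The genuine obstacle is this last faithfulness input; the rest of the argument is a mechanical composition of continuity statements already established in the paper.
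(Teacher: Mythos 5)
Your argument is correct and follows the same route as the paper: Corollary~\ref{cor-aff} plus Lemma~\ref{strong-cont} gives strong continuity of $\ep\mapsto\Pi^\ep(\phi^\ep)\psi$ for each fixed $\psi\in\mathcal{H}$, and lower semi-continuity of $\ep\mapsto\|\Pi^\ep(\phi^\ep)\|_{\mathbb{B}(\mathcal{H})}$ follows as a pointwise supremum over the unit ball of continuous functions of $\ep$. The one step you rightly flag as delicate --- that $\Pi^\ep$ is isometric on $\mathfrak{B}^\ep$ (a consequence of the amenability of $\mathcal{X}=\mathbb{R}^n$, so that the regular representation computes the full twisted crossed-product norm), whence $\|\Pi^\ep(\phi^\ep)\|_{\mathbb{B}(\mathcal{H})}=\|r^\ep_\mathfrak{z}\|_{\mathfrak{B}^\ep}$ --- is left tacit in the paper's proof, and you are right that without it the argument would only yield lower semi-continuity of a quantity dominated by $\|r^\ep_\mathfrak{z}\|_{\mathfrak{B}^\ep}$, which is not sufficient.
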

\begin{proof}
We use Corollary \ref{cor-aff}, the above Lemma \ref{strong-cont} and the well known fact that if a family
$\{S^\ep\}_{\ep\in I}$ of bounded linear operators in a Hilbert
space $\mathcal H$ is {\it strongly} continuous, then
$\ep\mapsto\parallel S^\ep\parallel_{\mathbb B(\mathcal H)}$ is
lower semi-continuous (as the supremum of a family of continuous functions).
\end{proof}

Combining the Corollaries \ref{upper-s-cont} and \ref{lower-s-cont} one proves Theorem \ref{T-M-1}
and finishes the proof of Theorem \ref{T-M}.

{\bf Acknowledgements:} M. M\u antoiu is partially supported by
{\it N\'ucleo Cientifico ICM P07-027-F "Mathematical Theory of
Quantum and Classical Magnetic Systems"} and by Chilean Science
Foundation {\it Fondecyt} under the Grant 1085162. R. Purice has
been supported by CNCSIS under the Ideas Programme, PCCE project
no. 55/2008 {\it Sisteme diferentiale in analiza neliniara si
aplicatii}. N. Athmouni thanks IMAR Bucharest for its hospitality
and Mondher Damak for his continuous support. R. Purice thanks the
Universities of Chile and Sfax for their hospitality.


\end{document}